
\documentstyle[twoside,psfig]{article}

\catcode`\@=11
\long\def\@makefntext#1{
\protect\noindent \hbox to 3.2pt {\hskip-.9pt  
$^{{\eightrm\@thefnmark}}$\hfil}#1\hfill}		

\def\@makefnmark{\hbox to 0pt{$^{\@thefnmark}$\hss}}	
	
\def\ps@myheadings{\let\@mkboth\@gobbletwo
\def\@oddhead{\hbox{}
\rightmark\hfil\eightrm\thepage}   
\def\@oddfoot{}\def\@evenhead{\eightrm\thepage\hfil
\leftmark\hbox{}}\def\@evenfoot{}
\def\sectionmark##1{}\def\subsectionmark##1{}}



\oddsidemargin=\evensidemargin
\addtolength{\oddsidemargin}{-30pt}
\addtolength{\evensidemargin}{-30pt}



\newcounter{sectionc}\newcounter{subsectionc}\newcounter{subsubsectionc}
\renewcommand{\section}[1] {\vspace{12pt}\addtocounter{sectionc}{1} 
\setcounter{subsectionc}{0}\setcounter{subsubsectionc}{0}\noindent 
   {\tenbf\thesectionc. #1}\par\vspace{5pt}}
\renewcommand{\subsection}[1] {\vspace{12pt}\addtocounter{subsectionc}{1} 
	\setcounter{subsubsectionc}{0}\noindent 
   {\bf\thesectionc.\thesubsectionc. {\kern1pt \bfit #1}}\par\vspace{5pt}}
\renewcommand{\subsubsection}[1]{\vspace{12pt}\addtocounter{subsubsectionc}{1}
	\noindent{\tenrm\thesectionc.\thesubsectionc.\thesubsubsectionc.
	{\kern1pt \tenit #1}}\par\vspace{5pt}}

\newcounter{appendixc}
\newcounter{subappendixc}[appendixc]
\newcounter{subsubappendixc}[subappendixc]
\renewcommand{\thesubappendixc}{\Alph{appendixc}.\arabic{subappendixc}}
\renewcommand{\thesubsubappendixc}
	{\Alph{appendixc}.\arabic{subappendixc}.\arabic{subsubappendixc}}

\renewcommand{\appendix}[1] {\vspace{12pt}
        \refstepcounter{appendixc}
        \setcounter{figure}{0}
        \setcounter{table}{0}
        \setcounter{lemma}{0}
        \setcounter{theorem}{0}
        \setcounter{corollary}{0}
        \setcounter{definition}{0}
        \setcounter{equation}{0}
        \renewcommand{\thefigure}{\Alph{appendixc}.\arabic{figure}}
        \renewcommand{\thetable}{\Alph{appendixc}.\arabic{table}}
        \renewcommand{\theappendixc}{\Alph{appendixc}}
        \renewcommand{\thelemma}{\Alph{appendixc}.\arabic{lemma}}
        \renewcommand{\thetheorem}{\Alph{appendixc}.\arabic{theorem}}
        \renewcommand{\thedefinition}{\Alph{appendixc}.\arabic{definition}}
        \renewcommand{\thecorollary}{\Alph{appendixc}.\arabic{corollary}}
        \renewcommand{\theequation}{\Alph{appendixc}.\arabic{equation}}
        \noindent{\tenbf Appendix#1}\par\vspace{5pt}}
\newcommand{\subappendix}[1] {\vspace{12pt}
        \refstepcounter{subappendixc}
        \noindent{\bf Appendix \thesubappendixc. {\kern1pt \bfit #1}}
	\par\vspace{5pt}}
\newcommand{\subsubappendix}[1] {\vspace{12pt}
        \refstepcounter{subsubappendixc}
        \noindent{\rm Appendix \thesubsubappendixc. {\kern1pt \tenit #1}}
	\par\vspace{5pt}}

\topsep=0in\parsep=0in\itemsep=0in
\parindent=15pt

\newcommand{\textlineskip}{\baselineskip=13pt}
\newcommand{\smalllineskip}{\baselineskip=10pt}

\def\eightcirc{
\begin{picture}(0,0)
\put(4.4,1.8){\circle{6.5}}
\end{picture}}
\def\eightcopyright{\eightcirc\kern2.7pt\hbox{\eightrm c}} 

\newcommand{\copyrightheading}[1]
	{\vspace*{-2.5cm}\smalllineskip{\flushleft
	{\footnotesize Mathematical Models and Methods in 
                       Applied Sciences #1}\\
	{\footnotesize $\eightcopyright$\, World Scientific Publishing
	 Company}\\
	 }}


\def\abstracts#1#2#3{{
	\centering{\begin{minipage}{4.5in}\baselineskip=10pt\footnotesize
	\parindent=0pt #1\par 
	\parindent=15pt #2\par
	\parindent=15pt #3
	\end{minipage}}\par}} 

\def\keywords#1{{
	\centering{\begin{minipage}{4.5in}\baselineskip=10pt\footnotesize
	{\footnotesize\it Keywords}\/: #1
	 \end{minipage}}\par}}


\renewenvironment{thebibliography}[1]
	{\frenchspacing
	 \ninerm\baselineskip=11pt
	 \begin{list}{\arabic{enumi}.}
        {\usecounter{enumi}\setlength{\parsep}{0pt}     
	 \setlength{\leftmargin 17pt}{\rightmargin 0pt}  
         \setlength{\itemsep}{0pt} \settowidth
	{\labelwidth}{#1.}\sloppy}}{\end{list}}

\newcounter{itemlistc}
\newcounter{romanlistc}
\newcounter{alphlistc}
\newcounter{arabiclistc}

\newcommand{\fcaption}[1]{
        \refstepcounter{figure}
        \setbox\@tempboxa = \hbox{\footnotesize Fig.~\thefigure. #1}
        \ifdim \wd\@tempboxa > 5in
           {\begin{center}
        \parbox{5in}{\footnotesize\smalllineskip Fig.~\thefigure. #1}
            \end{center}}
        \else
             {\begin{center}
             {\footnotesize Fig.~\thefigure. #1}
              \end{center}}
        \fi}

\newcommand{\tcaption}[1]{
        \refstepcounter{table}
        \setbox\@tempboxa = \hbox{\footnotesize Table~\thetable. #1}
        \ifdim \wd\@tempboxa > 5in
           {\begin{center}
        \parbox{5in}{\footnotesize\smalllineskip Table~\thetable. #1}
            \end{center}}
        \else
             {\begin{center}
             {\footnotesize Table~\thetable. #1}
              \end{center}}
        \fi}

\def\@citex[#1]#2{\if@filesw\immediate\write\@auxout
	{\string\citation{#2}}\fi
\def\@citea{}\@cite{\@for\@citeb:=#2\do
	{\@citea\def\@citea{,}\@ifundefined
	{b@\@citeb}{{\bf ?}\@warning
	{Citation `\@citeb' on page \thepage \space undefined}}
	{\csname b@\@citeb\endcsname}}}{#1}}

\newif\if@cghi
\def\cite{\@cghitrue\@ifnextchar [{\@tempswatrue
	\@citex}{\@tempswafalse\@citex[]}}
\def\citelow{\@cghifalse\@ifnextchar [{\@tempswatrue
	\@citex}{\@tempswafalse\@citex[]}}
\def\@cite#1#2{{$\null^{#1}$\if@tempswa\typeout
	{IJCGA warning: optional citation argument 
	ignored: `#2'} \fi}}

\def\pmb#1{\setbox0=\hbox{#1}
	\kern-.025em\copy0\kern-\wd0
	\kern.05em\copy0\kern-\wd0
	\kern-.025em\raise.0433em\box0}


\def\fnt#1#2{\footnotetext{\kern-.3em
	{$^{\mbox{\scriptsize #1}}$}{#2}}}

\def\fpage#1{\begingroup
\voffset=.3in
\thispagestyle{empty}\begin{table}[b]\centerline{\footnotesize #1}
	\end{table}\endgroup}

\def\runninghead#1#2{\pagestyle{myheadings}
\markboth{{\protect\footnotesize\it{\quad #1}}\hfill}
{\hfill{\protect\footnotesize\it{#2\quad}}}}
\headsep=15pt   
   
\font\tenrm=cmr10
\font\tenit=cmti10 
\font\tenbf=cmbx10
\font\bfit=cmbxti10 at 10pt
\font\ninerm=cmr9

\font\eightrm=cmr8




\newtheorem{lemma}{Lemma}[sectionc]
\newtheorem{definition}{Definition}[sectionc]
\newtheorem{theorem}{Theorem}[sectionc]

\newtheorem{proposition}{Proposition}[sectionc]

\newenvironment{proof}
{\noindent {\bf Proof.}}{\qed \ . \\}
\def\fc{\hbox{\rm l\hskip -5pt 1}}
\def\Rr{\hbox{\rm I\hskip -1.5pt R}}
\def\ds{\displaystyle }

\def\ea{{\it et al. }}


\textwidth=5truein
\textheight=7.56truein

\def\qed{\hbox{${\vcenter{\vbox{			
   \hrule height 0.4pt\hbox{\vrule width 0.4pt height 6pt
   \kern5pt\vrule width 0.4pt}\hrule height 0.4pt}}}$}}


\def\theequation{\thesectionc.\arabic{equation}}  

\begin{document}
\setlength{\textheight}{7.7truein}  

\runninghead{Well-posedness of a coupled biological-physical model for the upper ocean}
{Well-posedness of a coupled biological-physical model for the upper ocean}

\normalsize\textlineskip
\thispagestyle{empty}
\setcounter{page}{1}

\copyrightheading{}			

\vspace*{0.88truein}

\fpage{1}
\centerline{\bf ON THE WELL-POSEDNESS OF A COUPLED} 
\baselineskip=13pt
\centerline{\bf ONE-DIMENSIONAL BIOLOGICAL-PHYSICAL MODEL} 
\baselineskip=13pt
\centerline{\bf FOR THE UPPER OCEAN}
\vspace*{0.37truein}
\centerline{\footnotesize BLAISE FAUGERAS}
\vspace*{0.015truein}
\centerline{\footnotesize\it MAPLY, Centre de math\'ematique INSA de Lyon,}
\centerline{\footnotesize\it Bat. L\'eonard de Vinci, 21 avenue Jean Capelle,} 
\centerline{\footnotesize\it 69621, Villeurbanne Cedex, France.}
\centerline{\footnotesize\it faugeras@laninsa.insa-lyon.fr}
\vspace*{10pt}
\vspace*{0.225truein}

\vspace*{0.21truein}
\abstracts{This paper introduces a one-dimensional NPZD-model 
developed to simulate biological activity in a turbulent ocean water column. 
The model consists of a system of coupled semilinear parabolic equations. 
An initial-boundary value problem is formulated and the existence of a unique 
positive weak solution to it 
is proved. The existence result is derived using a variational formulation, 
an approximate model and a fixed-point method. It is shown that the qualitative analysis performed 
still applies if different parameterizations of several biological processes found in the
biogeochemical modeling literature are used.\\}{}{}
~\\
\keywords{Marine ecosystem model; semilinear parabolic system; variational formulation; positivity.\\
~\\
AMS Subject Classification: 86A05, 92F05, 35K45, 35K50, 35K57, 35R05}

\vspace*{1pt}\textlineskip	
\section{Introduction}
\vspace*{-0.5pt}
\noindent
Within the scope of global climate studies, authors carrying 
out modeling research 
use marine ecosystem models with increasing degrees of
complexity. Complexity in such models can arise from the number of
biological compartments, or state variables, which are taken into
account, as well as from the parameterizations used to model interactions
between these compartments. The number of variables 
can vary from one to more than ten. 
At least two variables, nutrients ($N$) and phytoplankton ($P$) are
necessary to model primary production, that is to say the
transformation of mineral nutrients into primitive biotic material
using external energy, provided by the sun (Taylor \ea
\cite{Taylor:1991}). 
However, in order to study the ocean carbon
cycle, the main biological processes which have to be understood and
estimated are primary production, but also the export of 
organic matter from the surface to deep ocean layers and organic matter
remineralization. The simpliest model able to represent all these
processes contains four variables, nutrients ($N$), phytoplankton
($P$), zooplankton ($Z$) and detritus ($D$). This type of model is
termed the NPZD-model and different variants of it are used in
numerous studies. All these models are similar from a structural 
point of view but authors use different parameterizations to model 
fluxes between biological compartments.

Complexity can also arise from the spatial resolution of the
physical dynamics to which biological variables are submitted. Many
model set-ups are zero-dimensional and biological variables correspond
to ocean mixed-layer values (e.g., Fasham \ea \cite{Fasham:1990}, Steele and
Henderson \cite{Steele:1992}, Spitz \ea \cite{Spitz:1998}, 
Fennel \ea \cite{Fennel:2001}). Others are one-dimensional, considering
that the ocean, in some particular places, can be modeled with a good
approximation by a turbulent water column 
(e.g., Prunet \ea \cite{Prunet:1996a}, Doney \ea
\cite{Doney:1996}, L\'evy \ea \cite{Levy:1998a}, M\'emery \ea
\cite{Memery:2002}). Finally, in some studies, the biological model
is integrated in a three-dimensional circulation model (e.g., Fasham \ea
\cite{Fasham:1993}, Moisan \ea \cite{Moisan:1996}, 
L\'evy \ea \cite{Levy:1998b}
, Carmillet \ea \cite{Carmillet:2001}). The question which has motivated
this work is: are all these models well-posed? Of course, it seems
difficult to study all of them and in this work we concentrate first 
on a one-dimensional NPZD-model and then discuss the possible
generalization of our result. The
three-dimensional version of the model we consider is proposed 
in L\'evy \ea \cite{Levy:1998b}, and a one-dimensional version 
of a similar model, containing six biological variables, 
is used by Faugeras \ea
\cite{Faugeras:2002} 
to assimilate data from the JGOFS-DYFAMED time-series 
station in the North-Western Mediterranean Sea. 

Mathematically, the
biological model under consideration is a system of 
coupled parabolic semilinear equations to which initial and boundary 
conditions are added. Under certain hypotheses, 
this general type of initial-boundary value problem can be transformed to 
an abstract Cauchy problem and studied using the theory of semigroups 
(following Chapter 6 of the book by Pazy \cite{Pazy:1983} for example). 
In their paper Boushaba \ea \cite{Boushaba:2002} used results on
semigroups to provide a mathematical analysis of a model describing
the evolution of a single variable phytoplankton. Although 
the model they considered is three-dimensional the biological 
reaction terms are quite simple since only production 
and mortality of phytoplanckton are represented. The model 
we propose here seems to be more realistic 
and has already been numerically 
validated using observations from the DYFAMED time-series station 
(L\'evy \ea \cite{Levy:1998b}, Faugeras \ea \cite{Faugeras:2002}).

If data are regular enough the semigroup method can enable 
the existence of classical solutions to be proved. 
However it does not enable parabolic equations with time-dependent 
irregular coefficients to be easily handled. Since this is the case in the 
NPZD-model we consider a variational formulation approach 
is more attractive.
The main purpose of this paper is to address the issue of the existence 
of weak solutions to this particular one-dimensional model. 
The method we propose is inspired from the
work of Artola \cite{Artola:1989} in which an existence result 
for a semilinear parabolic system is derived using a fixed-point 
argument. We introduce an approximate model and 
prove the existence of weak solutions to this model using this method.
We then pass to the limit in the approximate model to prove the existence 
of weak solutions to the NPZD-model. 
Furthermore, as the variables of the model represent concentrations 
they should be positive. We show this is the case. 

We shall now briefly outline the contents of the paper. In the next
section we introduce the equations of the one-dimensional NPZD-model
and give some comments on the different parameterizations used. 
In Section 3 we set
the mathematical framework and state our main result, which is proved
in Sections 4 and 5. The goal of Section 6 is twofold. First we show
that the existence and positivity results still hold when different
parameterizations found in the literature are used. 
Second, we address the issue of uniqueness
of solutions. In order to prove uniqueness we need the 
nonlinear reaction terms 
to satisfy a local Lipschitz condition. We show this is the 
case in our particular model.  

\vspace*{1pt}\textlineskip	
\section{Presentation of the one-dimensional NPZD-model}
\label{section:NPZD}
\vspace*{-0.5pt}
\noindent 
\subsection{Equations of the model}
\label{subsec:NPZD}
\noindent
In this section we give the equations of the one-dimensional NPZD-model and formulate 
the initial-boundary value problem which will be studied.

Let us first of all justify the use of a one-dimensional model. We have in mind numerical studies 
(Faugeras \ea \cite{Faugeras:2002}, L\'evy \ea \cite{Levy:1998a}, M\'emery \ea \cite{Memery:2002}) 
conducted with such one-dimensional models. In these studies simulations are forced with 
physical data (wind stress, heat fluxes, evaporation-precipitation) 
and validated by comparison with biogeochemical data (chlorophyll and nitrate) collected at the DYFAMED station. 
This station is located in the Northwestern Mediterranean Sea and is an interesting test case for several reasons. 
First, several biogeochemical production regimes that take place in the world ocean are found here. Secondly, the 
station is far enough away from the Ligurian Current to be sufficiently protected from lateral transport, 
thereby permitting a one-dimensional study.\\
In the above cited numerical studies the biogeochemical model is integrated 
in a one-dimensional physical model, which simulates the time evolution of velocity, temperature,
salinity and turbulent kinetic energy (TKE). Advection is neglected even though this might result 
in a crude approximation in summer during strong wind events (Andersen and Prieur \cite{Andersen:2000}). 
The only dynamic process which is taken into account is vertical diffusion.

The one-dimensional NPZD-model consists of four coupled semilinear parabolic equations.
Before introducing them let us give some notations. In all the following 
we denote the nutrient, phytoplankton, zooplankton and detritus
concentration vector by,
$$
{\bf C}=(N,P,Z,D)=(C_1,C_2,C_3,C_4),
$$
and the reaction terms by, 
$$
{\bf f}=(f_N,f_P,f_Z,f_D)=(f_{1},f_2,f_3,f_4).
$$ 
The equations of the NPZD-model read as follows. For $i=1$ to $4$:

\begin{equation}
\label{eqn:p1}
\left \lbrace \begin{array}{ll}
\ds \frac{\partial C_i}{\partial t}- \ds \frac{\partial}{\partial x}(d(t,x)
 \ds \frac{\partial C_i}{\partial x}) + \delta_{i,4} v_d \frac{\partial
C_i}{\partial x} = f_i(t,x,{\bf C}), & t \in ]0,T], \quad x \in ]0,L[, \\[7pt]
\ds \frac{\partial C_i}{\partial x}(t,0)=\ds \frac{\partial C_i}{\partial
x}(t,L)=0, & t \in ]0,T],\\[7pt]
C_i(0,x)=C_i^0(x), & x \in ]0,L[, \\
\end{array}
\right.
\end{equation}
with
\begin{equation}
\label{eqn:p2}
\left \lbrace \begin{array}{lll}
f_N(t,x,{\bf C})&=&(-\mu_p(1-\gamma) L_I(t,x,P) L_{N}P+\mu_z Z + \mu_d D)\ \fc_{]0,l]}(x)\\[7pt] 
          & &+(\tau(P+Z+D))\ \fc_{]l,L[}(x),\\[7pt]
f_P(t,x,{\bf C})&=&(\mu_p(1-\gamma)L_I(t,x,P)L_NP-G_PZ-m_pP)\ \fc_{]0,l]}(x)\\[7pt]
          & &+(-\tau P)\ \fc_{]l,L[}(x),\\[7pt]
f_Z(t,x,{\bf C})&=&(a_pG_PZ + a_dG_DZ -m_zZ -\mu_zZ)\ \fc_{]0,l]}(x)\\[7pt]
          & &+(-\tau Z)\ \fc_{]l,L[}(x),\\[7pt]
f_D(t,x,{\bf C})&=&((1-a_p)G_PZ-a_dG_DZ+m_pP+m_zZ-\mu_dD)\ \fc_{]0,l]}(x)\\[7pt]
          & &+(-\tau D)\ \fc_{]l,L[}(x).\\
\end{array}
\right.
\end{equation}
\noindent
$T$ is a fixed time. In numerical simulations, system (\ref{eqn:p1}) is intregated over a period of time which can 
vary from one month to a few years.\\
~\\
$L$ is the depth of the water column under consideration ($L \approx 1000$ m), $l$ is the
maximum depth of the euphotic layer ($l \approx 200$ m).\\
~\\
$\fc_{]0,l]}$ and $\fc_{]l,L[}$ are the usual indicator functions,
$$
\fc_{]0,l]}(x)=
\left \lbrace \begin{array}{l}
1 \quad {\mathrm{if}} \quad x \in ]0,l],\\
0 \quad {\mathrm{otherwise}}.\\
\end{array}
\right.
$$
\noindent
$\delta_{i,4}$ is the Kronecker symbol, 
$$
\delta_{i,4} =
\left \lbrace \begin{array}{l}
1 \quad {\mathrm{if}} \quad i=4,\\
0 \quad {\mathrm{otherwise}}.
\end{array}
\right.
$$
\noindent
~\\
Neuman boundary conditions at $x=0$ and $x=L$ express the fact that there is no flux through the surface of the ocean 
and through the ocean floor.\\
~\\
Initial concentrations, $C_i^0$, satisfy $C_i^0(x) \ge 0$ for all $x \in ]0,L[$.\\
~\\
The different parameters which appear in the reaction terms $f_i$ are strictly positive
constants. All of them are shown in Table \ref{tab:paramNPZD}. 
A schematic representation of
the model is shown on Figure \ref{fig:NPZD}. 
Let us note that parameters 
$\gamma, a_p$ and $a_d$ satisfy $1-\gamma > 0$, $1-a_p> 0$ and $1-a_d > 0$.\\ 
The nonlinear functions $L_I, L_N, G_P$ and $G_D$ are given explicitly 
in the following subsection, and more details about the model can be found 
in L\'evy \ea \cite{Levy:1998b}. 

\begin{table}[htbp]
\caption{ Parameter values \label{tab:paramNPZD} }
\begin{tabular}{|l|c|c|c|}
\hline
parameter& name & value & unit \\
\hline
half-saturation\ constant 
& $k_n$ & 0.5 & $mmolNm^{-3}$ \\ 

maximal\ grazing\ rate& $g_z$ &0.75 &$day^{-1}$\\
 
half-saturation\ constant\ for\ grazing& $k_z$ & 1 & $mmolN.m^{-3}$\\

assimilated\ fraction\ of\ phytoplankton 
& $a_p$ & 0.7 & \\

assimilated\ fraction\ of\ detritus 
& $a_d$ & 0.5 & \\

zooplankton\ excretion\ rate 
&$\mu_z$&0.1&$day^{-1}$\\

phytoplankton\ mortality\ rate
&$m_p$ &0.03 & $day^{-1}$\\

zooplankton\ mortality\ rate
& $m_z$ &0.03 &$day^{-1}$\\

detritus\ remineralization\ rate& $\mu_d$ & 0.09 & $day^{-1}$\\

detritus\ sedimentation\ speed &$v_d$& 5 & $m.day^{-1}$\\

maximal\ growth\ rate\ & $\mu_p$ & 2 & $day^{-1}$\\

exsudation\ fraction & $\gamma$ & 0.05 & \\

remineralization\ rate&$\tau$ & 0.05 &$day^{-1}$\\

\hline
\end{tabular}
\end{table}

\begin{figure*}[!h]
\caption{\label{fig:NPZD}
Schematic representation of the compartments and processes of the NPZD
surface layer model.}
\centering
\begin{picture}(0,0)%
\special{psfile=schemaNPZD_english.ps}%
\end{picture}%
\setlength{\unitlength}{1973sp}%
\begingroup\makeatletter\ifx\SetFigFont\undefined
\def\x#1#2#3#4#5#6#7\relax{\def\x{#1#2#3#4#5#6}}%
\expandafter\x\fmtname xxxxxx\relax \def\y{splain}%
\ifx\x\y   
\gdef\SetFigFont#1#2#3{%
  \ifnum #1<17\tiny\else \ifnum #1<20\small\else
  \ifnum #1<24\normalsize\else \ifnum #1<29\large\else
  \ifnum #1<34\Large\else \ifnum #1<41\LARGE\else
     \huge\fi\fi\fi\fi\fi\fi
  \csname #3\endcsname}%
\else
\gdef\SetFigFont#1#2#3{\begingroup
  \count@#1\relax \ifnum 25<\count@\count@25\fi
  \def\x{\endgroup\@setsize\SetFigFont{#2pt}}%
  \expandafter\x
    \csname \romannumeral\the\count@ pt\expandafter\endcsname
    \csname @\romannumeral\the\count@ pt\endcsname
  \csname #3\endcsname}%
\fi
\fi\endgroup
\begin{picture}(11969,7694)(54,-7733)
\put(3526,-1486){\makebox(0,0)[lb]{\smash{\SetFigFont{6}{7.2}{rm}Z}}}
\put(8551,-1486){\makebox(0,0)[lb]{\smash{\SetFigFont{6}{7.2}{rm}P}}}
\put(3601,-4936){\makebox(0,0)[lb]{\smash{\SetFigFont{6}{7.2}{rm}D}}}
\put(8401,-4936){\makebox(0,0)[lb]{\smash{\SetFigFont{6}{7.2}{rm}   N}}}
\put(8701,-6286){\makebox(0,0)[lb]{\smash{\SetFigFont{6}{7.2}{rm}  winter}}}
\put(8701,-6511){\makebox(0,0)[lb]{\smash{\SetFigFont{6}{7.2}{rm}  convection}}}
\put(5326,-4861){\makebox(0,0)[lb]{\smash{\SetFigFont{6}{7.2}{rm}remineralization}}}
\put(8776,-2386){\makebox(0,0)[lb]{\smash{\SetFigFont{6}{7.2}{rm}production}}}
\put(5326,-1336){\makebox(0,0)[lb]{\smash{\SetFigFont{6}{7.2}{rm}grazing, $G_pZ$}}}
\put(2851,-2611){\makebox(0,0)[lb]{\smash{\SetFigFont{6}{7.2}{rm}$G_dZ$}}}
\put(1726,-4261){\makebox(0,0)[lb]{\smash{\SetFigFont{6}{7.2}{rm}mortality, $m_z Z$}}}
\put(1351,-3961){\makebox(0,0)[lb]{\smash{\SetFigFont{6}{7.2}{rm}$(1-a_p)G_pZ+(1-a_d)G_dZ$}}}
\put(1876,-3736){\makebox(0,0)[lb]{\smash{\SetFigFont{6}{7.2}{rm}fecal
pellets}}}
\put(2401,-2386){\makebox(0,0)[lb]{\smash{\SetFigFont{6}{7.2}{rm}grazing}}}
\put(5101,-3961){\makebox(0,0)[lb]{\smash{\SetFigFont{6}{7.2}{rm}   mortality}}}
\put(4951,-4261){\makebox(0,0)[lb]{\smash{\SetFigFont{6}{7.2}{rm}$m_p P$}}}
\put(6826,-3586){\makebox(0,0)[lb]{\smash{\SetFigFont{6}{7.2}{rm}$\mu_z Z$}}}
\put(7201,-3886){\makebox(0,0)[lb]{\smash{\SetFigFont{6}{7.2}{rm}excretion}}}
\put(8701,-3736){\makebox(0,0)[lb]{\smash{\SetFigFont{6}{7.2}{rm} exsudation}}}
\put(8701,-3961){\makebox(0,0)[lb]{\smash{\SetFigFont{6}{7.2}{rm}$\gamma \mu_p L_I L_N P$}}}
\put(2626,-5986){\makebox(0,0)[lb]{\smash{\SetFigFont{6}{7.2}{rm}$v_d \frac{\partial D}{\partial z}$}}}
\put(2176,-5686){\makebox(0,0)[lb]{\smash{\SetFigFont{6}{7.2}{rm}sedimentation}}}
\put(5626,-5161){\makebox(0,0)[lb]{\smash{\SetFigFont{6}{7.2}{rm}$\mu_d D$}}}
\put(8776,-2611){\makebox(0,0)[lb]{\smash{\SetFigFont{6}{7.2}{rm}$\mu_p L_IL_{N}P$}}}
\end{picture}
\end{figure*}

\subsection{Comments and hypotheses}
\label{ssec:comethyp}
\noindent
\begin{enumerate}
\item 
The mixing or diffusion coefficient, $d(t,x)$, is obtained diagnostically from TKE 
(Gaspar \ea \cite{Gaspar:1990}). 
In modeling studies it is considered in the first approximation that 
biological variables do not influence physical variables. As a consequence biological tracers 
are vertically mixed with the same coefficient as temperature and salinity. 
This coefficient is an output of 
the physical model and data for the biological model. 
Consequently it does not depend on ${\bf C}$. 
We are thus given once and for all a mixing 
coefficient $d(t,x)$. It strongly varies in space and time and we can not assume it is 
particularly regular (Lewandosky \cite{Lewandosky:1997}). The usual basic 
assumption made in the mathematical literature as well as in numerical studies is the following. 
We suppose that 
$$
0 < d_0 \le d(t,x) \le d_{\infty},\ a.e.\ {\mathrm{in}}\ ]0,T[ \times ]0,L[.
$$ 

\item  
Because of functions $\fc_{]0,l]}$ and $\fc_{]l,L[}$, 
the equations of the model are not the same above and below
the depth $l$ which physically corresponds to the depth at which the
action of light on the system becomes negligeable. This
corresponds to a discontinuity of the reaction terms $f_i(t,x,{\bf C})$ at
the point $x=l$. It is the choice of modelization made by L\'evy \ea \cite{Levy:1998b}. 
Above the depth $l$ the reaction terms correspond to the schematic representation of
the model shown on Figure \ref{fig:NPZD}. The basic biogeochemical fluxes are represented using 
a minimum number of prognostic variables. Nutrients allow the estimation 
of production to be made. Zooplankton 
mortality and detrital sedimentation feed the particle export flux. Below the depth $l$ 
remineralization processes are preponderent and the surface model does not apply. 
Instead decay of phytoplankton, zooplankton and detritus in nutrients parameterize remineralization. 
More details about the modeled biogeochemical processes can be found in 
L\'evy \ea \cite{Levy:1998b}. In the following points we give the analytical expression of the nonlinear 
terms which are used.

\item  
$L_N$, $G_P$ and $G_D$ are nonlinear functions.
  \begin{itemize}
  \item $L_N$ parameterizes the nutrient limitation on phytoplankton
growth. It follows the Michaelis-Menten kinetic, 
$L_{N}=\ds \frac{N}{k_n+N}$. The possible nullification of the term
$k_n+N$, invites us to define, $L_{N}=\ds \frac{N}{k_n+|N|}$. 
This formulation will be used in the following. 
We will show that if initial concentrations are positive
then concentrations always stay positive, thus the two formulations
are equivalent. Let us remark that,
\begin{itemize}
  \item[] $L_N$ is defined and continuous on $\Rr$,
  \item[] $|L_N(N)| \le 1$, $\forall N \in \Rr$. 
\end{itemize}
  \item $G_P$ and $G_D$ are the zooplankton grazing rates on
phytoplankton and detritus. The formulation used is a 
squared Michaelis-Menten response function:
$$
\begin{array}{l}
G_P=\ds \frac{g_zP^2}{k_z+P^2},\\
G_D=\ds \frac{g_zD^2}{k_z+D^2}.\\
\end{array}
$$ 
In the remainder of this paper we use the following properties:  
\begin{itemize}
  \item[] $G_P$ and $G_D$ are defined and continuous on $\Rr$, 
  \item[] $|G_P| \le g_z $, $\forall P \in \Rr$, 
  \item[] $|G_D| \le g_z $, $\forall D \in \Rr$. 
\end{itemize}
\end{itemize}

\item 
The limitation of phytoplankton growth by light is parameterized by,
$$
L_I(t,x,P)=1-\exp(-PAR(t,x,P)/k_{par}),
$$ 
$k_{par}$ is a positive constant. The photosynthetic available radiation,
$PAR$, is predicted from surface
irradiance and phytoplankton pigment content by a light
absorption model according to L\'evy \ea \cite{Levy:1998b}. From a biological
point of view, the fact that $PAR$ depends on $P$ is important. This
models the so-called self-shading effect. We give further details of the
parameterization of $PAR$ in Section 6, and here we
only suppose it is a positive function, continuous in $P$ for
a.e. $t,x$ and measurable 
in $t,x$ for all $P$. In order to
prove the existence result we have to notice that:
\begin{itemize}
  \item[] $L_I$ is defined on $[0,T] \times [0,L] \times \Rr$,
  \item[] $0 \le L_I(t,x,P) \le 1$, a.e in $[0,T] \times [0,L] \times \Rr$, 
  \item[] $(t,x) \rightarrow L_I(t,x,P)$ is measurable, for all $P \in
  \Rr$,
  \item[] $P \rightarrow L_I(t,x,P)$ is continuous, for a.e $(t,x) \in
  [0,T] \times [0,L]$.
\end{itemize}

\item
Eventually, let us remark the presence of the advection term 
$v_d \ds \frac{\partial D}{\partial x}$ in the detritus equations. 
Detritus, $D$, sink at a speed of $v_d$. 
\end{enumerate}

\vspace*{1pt}\textlineskip	
\section{Mathematical preliminaries and statement of main result}
\vspace*{-0.5pt}
\noindent 
\subsection{Functional spaces}
\noindent
In this section we introduce the functional spaces which we use in the
remainder of this work. All this study is conducted on the open set $]0,L[$ and 
$T$ is a fixed time. 
Throughout this work, concentrations, $C_i$, are considered as
elements of the functional space $L^2(0,L)$ whose Hilbert space 
structure is convenient
to use. However, let us remember that $L^2(0,L)$ is continuously imbedded into $L^1(0,L)$
which is a natural space for concentrations.

${\bf H}$ and ${\bf H}^1$ are the separable Hilbert spaces defined by
$$
\begin{array}{l}
{\bf H}=(L^2(0,L))^4,\\
{\bf H}^1=(H^1(0,L))^4.
\end{array}
$$
${\bf H}$ is equipped with the scalar product
$$
\begin{array}{ll}
({\bf C},\hat{{\bf C}})&= 
 \ds \int_{0}^{L} \ds \sum_{i=1}^{4} C_i(x)\hat{C}_i(x)dx\\[7pt]
&=  \ds \sum_{i=1}^{4} (C_i,\hat{C}_i)_{L^2(0,L)}.\\
\end{array}
$$
We denote by $||.||$ the induced norm on ${\bf H}$.\\

${\bf H}^1$ is equipped with the scalar product
$$
\begin{array}{ll}
({\bf C},\hat{{\bf C}})_1
&= \displaystyle \int_{0}^{L} \ds \sum_{i=1}^{4} C_i(x)\hat{C}_i(x)dx+ \displaystyle \int_{0}^{L}
 \ds \ds \sum_{i=1}^{4} \ds \frac{\partial C_i(x)}{\partial x} \ds \frac{\partial
\hat{C}_i(x)}{\partial x}dx\\[7pt]
&=  \ds \ds \sum_{i=1}^{4} (C_i,\hat{C}_i)_{L^2(0,L)}+
 \ds \ds \sum_{i=1}^{4} (\ds \frac{\partial C_i}{\partial x},\ds \frac{\partial
\hat{C}_i}{\partial x})_{L^2(0,L)}.\\
\end{array}
$$
We denote by $||.||_1$, the induced norm on ${\bf H}^1$.

We will also have to consider the space ${\bf L}^{\infty} = (L^{\infty}(0,L))^4$. 
$L^{\infty}(0,L)$ is a Banach equipped with the norm 
$$
||C_i||_{\infty} = inf \lbrace M;|C_i(x)| \le M \ a.e. \ in \ (0,L) \rbrace.
$$ 
Similarly ${\bf L}^{\infty}$ is a Banach space equipped with the norm
$$
||{\bf C}||_{\infty} = 
\sup_{i=1, ..., 4}
||C_i||_{\infty}.
$$

Now, if $X$ is a real Banach space equipped with the norm $||.||_X$,
$C([0,T],X)$ is the space of continuous functions on $[0,T]$ with
values in $X$, equipped with the norm,
$$
||{\bf C}||_{C([0,T],X)}=\sup_{[0,T]}
||{\bf C}(t)||_X.
$$
Similarly $L^2(0,T,X)$ is the space of functions $L^2$ in
time with values in $X$, equipped with the norm,
$$
||{\bf C}||_{L^2(0,T,X)}=(\int_{0}^{T}||{\bf C}(t)||_X^2 dt)^{1/2},
$$
and $L^{\infty}(0,T,X)$ is the space of functions $L^{\infty}$ in time
with values in $X$, equipped with the norm, 
$$
||{\bf C}||_{L^{\infty}(0,T,X)}= inf \lbrace M;||{\bf C}(t)||_X 
\le M 
\ a.e \ in \ (0,T) \rbrace.
$$
$C([0,T],X)$, $L^2(0,T,X)$ and $L^{\infty}(0,T,X)$ are Banach spaces.

We have the useful

\begin{lemma}
\label{lemma:injection}
The imbedding, ${\bf H}^1 \subset {\bf L}^{\infty}$, is continuous.\\
The imbeddings, ${\bf H}^1 \subset {\bf H}$ and ${\bf H}^1 \subset
C([0,L],\Rr^4)$ are compact.\\
\end{lemma}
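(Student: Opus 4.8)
The plan is to reduce the whole statement to the classical one-dimensional Sobolev embedding for a single scalar component, and then transfer the conclusions to the product spaces. This last transfer is immediate because the norms $||\cdot||$, $||\cdot||_1$ and $||\cdot||_{\infty}$ on ${\bf H}$, ${\bf H}^1$ and ${\bf L}^{\infty}$ are built componentwise from the corresponding scalar norms, so continuity (respectively compactness) of each component embedding yields continuity (respectively compactness) of the product embedding after finitely many successive extractions.

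First I would prove the continuous embedding ${\bf H}^1 \subset {\bf L}^{\infty}$ by establishing the scalar statement $H^1(0,L) \hookrightarrow C([0,L],\Rr)$ with a quantitative bound. On an interval, $H^1(0,L)$ consists of the absolutely continuous functions whose derivative lies in $L^2$, so the fundamental theorem of calculus gives $u(x)-u(y)=\int_y^x u'(s)\,ds$. Applying Cauchy--Schwarz yields $|u(x)-u(y)| \le ||u'||_{L^2(0,L)}\,|x-y|^{1/2}$, which already shows that each element of $H^1(0,L)$ has a H\"older-continuous representative. Integrating the identity $u(x)=u(y)+\int_y^x u'$ over $y \in (0,L)$ and using Cauchy--Schwarz once more bounds $\sup_x |u(x)|$ by $\tfrac{1}{\sqrt{L}}||u||_{L^2(0,L)} + \sqrt{L}\,||u'||_{L^2(0,L)} \le C\,||u||_{H^1(0,L)}$, with $C$ depending only on $L$. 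Taking the maximum over the four components gives $||{\bf C}||_{\infty} \le C\,||{\bf C}||_1$, which is the asserted continuity.

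Next, for the compact embedding ${\bf H}^1 \subset C([0,L],\Rr^4)$ I would invoke the Arzel\`a--Ascoli theorem. The two inequalities above show that a sequence bounded in $H^1(0,L)$ is both uniformly bounded and uniformly H\"older-continuous with exponent $1/2$ and a common constant, hence equicontinuous; Arzel\`a--Ascoli then extracts a subsequence converging in $C([0,L],\Rr)$. Carrying this out on each of the four components in turn produces a subsequence converging in $C([0,L],\Rr^4)$, which is compactness. Finally, the compact embedding ${\bf H}^1 \subset {\bf H}$ follows by composition: since $(0,L)$ is bounded, $C([0,L],\Rr^4) \hookrightarrow {\bf H}$ is continuous (because $||\cdot||_{L^2} \le \sqrt{L}\,||\cdot||_{\infty}$), and the composition of a compact map with a continuous one is compact.

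There is no genuine obstacle in this argument; it is textbook one-dimensional Sobolev theory. The only point deserving care is the scalar embedding $H^1(0,L) \hookrightarrow C([0,L])$ together with its H\"older bound, namely that $H^1$ functions in dimension one admit continuous representatives with a modulus of continuity controlled by $||u'||_{L^2}$. Everything that follows is a routine application of Arzel\`a--Ascoli and of the finite-dimensionality of the product structure.
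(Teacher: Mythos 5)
Your proof is correct: the scalar bound $\sup_x|u(x)|\le \frac{1}{\sqrt{L}}\|u\|_{L^2}+\sqrt{L}\|u'\|_{L^2}$, the H\"older-$\tfrac12$ equicontinuity, the Arzel\`a--Ascoli extraction, and the componentwise transfer to the product spaces are all sound. The paper proves the lemma by simply citing the standard one-dimensional Sobolev embedding results (Br\'ezis, Corollaries IX.14 and IX.16, and Rellich--Kondrachov), so you have essentially reproduced the textbook proofs of exactly the facts the paper invokes; there is no substantive difference in approach.
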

\begin{proof}
It is a consequence of corollaries IX.14 and IX.16 in
Br\'ezis \cite{Brezis:1992}, 
and of the Rellich-Kondrachoff theorem (Lions and
Magenes \cite{Lions:1968})
\end{proof}

${\bf H}'$ denotes the dual of ${\bf H}$ and $({\bf H}^1)'$ the
dual of ${\bf H}^1$. When ${\bf H}$ is identified with its dual, we
have the classical scheme, 
$$
{\bf H}^1 \subset {\bf H}={\bf H}' \subset ({\bf H}^1)',
$$
where each space is dense in the following and the imbeddings are continuous.

Let us denote by $W({\bf H}^1)$ the Hilbert space,
$$
W({\bf H}^1) = \lbrace {\bf C} \in
L^2(0,T,{\bf H}^1); \frac{d {\bf C}}{dt} \in L^2(0,T,({\bf H}^1)')
\rbrace.
$$ 

\begin{lemma}
Every ${\bf C} \in W({\bf H}^1)$ is a.e equal to a continuous
function from $[0,T]$ to ${\bf H}$. Moreover we have the following
continuous imbedding,
$$
W({\bf H}^1) \subset C([0,T],{\bf H}).
$$
\end{lemma}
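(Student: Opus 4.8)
The statement is the standard embedding theorem for the Hilbert (Gelfand) triple ${\bf H}^1 \subset {\bf H} = {\bf H}' \subset ({\bf H}^1)'$ set up just above, so the plan is to follow the classical Lions--Magenes argument: reduce everything to an \emph{a priori} estimate valid for smooth functions, and then transfer it to all of $W({\bf H}^1)$ by density. Write $V = {\bf H}^1$, $H = {\bf H}$, $V' = ({\bf H}^1)'$, and let $\langle \cdot,\cdot\rangle$ denote the duality pairing between $V'$ and $V$, which coincides with the scalar product $(\cdot,\cdot)$ of $H$ whenever both arguments lie in $H$.

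First I would establish that the smooth functions --- say the restrictions to $[0,T]$ of elements of $C^1(\Rr,V)$, or equivalently finite sums $\ds\sum_k \varphi_k(t)\,e_k$ with $\varphi_k \in C^1([0,T])$ and $e_k \in V$ --- form a dense subspace of $W({\bf H}^1)$ for its graph norm $||{\bf C}||_{W}^2 = ||{\bf C}||_{L^2(0,T,V)}^2 + ||\ds\frac{d{\bf C}}{dt}||_{L^2(0,T,V')}^2$. The usual route is to extend ${\bf C}$ from $(0,T)$ to all of $\Rr$ (reflecting across the endpoints, then truncating so as to obtain compact support in a slightly larger interval) and to mollify in time; one then checks that the regularizations converge to ${\bf C}$ in $L^2(0,T,V)$ while their time derivatives converge to $\ds\frac{d{\bf C}}{dt}$ in $L^2(0,T,V')$.

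For a smooth $u$ the chain rule gives $\ds\frac{d}{dt}||u(t)||^2 = 2\langle \ds\frac{du}{dt}(t), u(t)\rangle$, hence for all $s,t \in [0,T]$
\[
||u(t)||^2 - ||u(s)||^2 = 2\int_s^t \left\langle \ds\frac{du}{d\sigma}(\sigma),\, u(\sigma)\right\rangle d\sigma .
\]
Integrating this identity in $s$ over $[0,T]$ and using Cauchy--Schwarz on the duality pairing yields
\[
||u(t)||^2 \le \ds\frac{1}{T}\,||u||_{L^2(0,T,H)}^2 + 2\,||\ds\frac{du}{dt}||_{L^2(0,T,V')}\,||u||_{L^2(0,T,V)},
\]
so that $||u||_{C([0,T],H)} \le C\,||u||_{W}$ with $C$ independent of $u$. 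This is exactly the asserted continuity of the embedding, restricted to smooth functions.

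Finally, given an arbitrary ${\bf C} \in W({\bf H}^1)$, I would pick smooth ${\bf C}_n \to {\bf C}$ in $W({\bf H}^1)$; the uniform estimate shows $({\bf C}_n)$ is Cauchy in the Banach space $C([0,T],{\bf H})$, hence converges there to some continuous $\tilde{{\bf C}}$, while convergence in $L^2(0,T,{\bf H})$ forces $\tilde{{\bf C}} = {\bf C}$ a.e.; passing to the limit in the estimate then extends it to ${\bf C}$ and delivers the continuous imbedding $W({\bf H}^1) \subset C([0,T],{\bf H})$. The main obstacle is the first step: the density of smooth functions together with the rigorous justification of the energy identity $\ds\frac{d}{dt}||u||^2 = 2\langle u',u\rangle$. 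The delicate point is the mismatch $u(t) \in V$ versus $\ds\frac{du}{dt}(t) \in V'$, so that the product $\langle u',u\rangle$ is only meaningful through the duality pairing; the mollification-in-time must be arranged so that $u_n \to u$ in $L^2(0,T,V)$ and $\ds\frac{du_n}{dt} \to \ds\frac{du}{dt}$ in $L^2(0,T,V')$ simultaneously, which is precisely what legitimizes passing to the limit in the identity.
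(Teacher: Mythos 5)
Your argument is correct and is precisely the classical Lions--Magenes/Dautray--Lions proof (density of smooth functions in $W({\bf H}^1)$, the energy identity and resulting a priori bound $||u||_{C([0,T],{\bf H})} \le C\,||u||_{W}$, then passage to the limit), which is exactly what the paper invokes by simply citing Dautray and Lions rather than writing it out. You also correctly flag the one delicate point, namely that $\langle u',u\rangle$ must be read through the $({\bf H}^1)'$--${\bf H}^1$ duality and justified by simultaneous approximation in both norms.
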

\begin{proof}
See Dautray and Lions \cite{Dautray:1988b} for example
\end{proof}

Moreover, because the injective mapping ${\bf H}^1 \subset {\bf H}$
is compact, we know that,

\begin{lemma}
\label{lemma:compacite}
The identity mapping, $W({\bf H}^1) \subset L^2(0,T,{\bf H})$, is compact.\\
\end{lemma}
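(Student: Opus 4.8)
The plan is to recognize this statement as the Aubin--Lions compactness lemma and to check that our Gelfand triple satisfies its hypotheses. We have the scheme ${\bf H}^1 \subset {\bf H} \subset ({\bf H}^1)'$ in which the first inclusion is compact (Lemma~\ref{lemma:injection}) and the second is continuous, and $W({\bf H}^1)$ is precisely the set of ${\bf C} \in L^2(0,T,{\bf H}^1)$ whose distributional derivative lies in $L^2(0,T,({\bf H}^1)')$. With both time exponents equal to $2$, the abstract theorem applies directly, so the shortest route is to invoke it (see, e.g., \cite{Dautray:1988b}).

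Should a self-contained argument be preferred, I would proceed as follows. Let $({\bf C}_n)$ be bounded in $W({\bf H}^1)$; the goal is to extract a subsequence converging in $L^2(0,T,{\bf H})$. The first ingredient is the Ehrling interpolation inequality: since ${\bf H}^1 \hookrightarrow {\bf H}$ is compact and ${\bf H} \hookrightarrow ({\bf H}^1)'$ is continuous, for every $\varepsilon>0$ there is a constant $C_\varepsilon$ such that
$$
\|{\bf v}\| \le \varepsilon\,\|{\bf v}\|_1 + C_\varepsilon\,\|{\bf v}\|_{({\bf H}^1)'}, \qquad \forall\,{\bf v}\in{\bf H}^1 .
$$
Applying this to the differences ${\bf C}_n-{\bf C}_m$ and integrating in time yields
$$
\|{\bf C}_n-{\bf C}_m\|_{L^2(0,T,{\bf H})} \le \varepsilon\,\|{\bf C}_n-{\bf C}_m\|_{L^2(0,T,{\bf H}^1)} + C_\varepsilon\,\|{\bf C}_n-{\bf C}_m\|_{L^2(0,T,({\bf H}^1)')} .
$$
As the first factor on the right is bounded by a fixed constant $M$, it suffices to produce one subsequence that is Cauchy in the weaker space $L^2(0,T,({\bf H}^1)')$; letting $\varepsilon\to0$ along that fixed subsequence then forces it to be Cauchy in $L^2(0,T,{\bf H})$.

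The second ingredient is thus relative compactness of $({\bf C}_n)$ in $L^2(0,T,({\bf H}^1)')$, which I would obtain from the vector-valued Fr\'echet--Kolmogorov criterion. Time translations are controlled by
$$
{\bf C}_n(t+h)-{\bf C}_n(t)=\int_t^{t+h}\frac{d{\bf C}_n}{ds}\,ds,
$$
whence, using the uniform bound on $d{\bf C}_n/dt$ in $L^2(0,T,({\bf H}^1)')$ together with Cauchy--Schwarz,
$$
\|{\bf C}_n(\cdot+h)-{\bf C}_n(\cdot)\|_{L^2(0,T-h,({\bf H}^1)')} \le h\,\Big\|\frac{d{\bf C}_n}{dt}\Big\|_{L^2(0,T,({\bf H}^1)')},
$$
which tends to $0$ as $h\to0$ uniformly in $n$; the required precompactness of the time-averages $\int_{t_1}^{t_2}{\bf C}_n\,dt$, which are bounded in ${\bf H}^1$ by Cauchy--Schwarz in time, follows from the compactness of the composite embedding ${\bf H}^1 \hookrightarrow ({\bf H}^1)'$.

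I expect the main obstacle to be this second ingredient, namely converting the bound on the time derivative into uniform smallness of time translations and combining it with the spatial precompactness to close the Fr\'echet--Kolmogorov argument; the Ehrling inequality itself is routine, following from the compactness of ${\bf H}^1\hookrightarrow{\bf H}$ by a standard contradiction argument. This is precisely where the compactness of ${\bf H}^1\subset{\bf H}$ --- rather than mere continuity --- is indispensable. Given that the earlier embedding lemmas are dispatched by citing standard references, I anticipate the cleanest presentation simply invokes the Aubin--Lions theorem.
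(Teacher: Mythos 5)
Your proposal is correct and takes the same route as the paper, which simply cites the Aubin--Lions compactness theorem (Aubin \cite{Aubin:1963}, Lions \cite{Lions:1969}) exactly as your first paragraph recommends. The additional self-contained sketch via the Ehrling inequality and control of time translations is a sound rendition of the standard proof of that theorem, but the paper does not reproduce it.
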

\begin{proof}
See Aubin \cite{Aubin:1963} or Lions \cite{Lions:1969}
\end{proof}

\subsection{A preliminary transformation of the system and the bilinear form
$a(t,{\bf C},{\bf C}')$}
\noindent
In order to work with a bilinear form as simple as possible, we start
by adding $\lambda C_i$ to both sides of system (\ref{eqn:p1}). The
value of $\lambda > 0$ will be fixed in what follows. This leads to
the equivalent system, for $i=1$ to $4$:
\begin{equation}
\label{eqn:p1equi}
\left \lbrace \begin{array}{ll}
\ds \frac{\partial C_i}{\partial t}- \ds \frac{\partial}{\partial x}(d(t,x)
 \ds \frac{\partial C_i}{\partial x}) + \delta_{i,4} v_d \frac{\partial
C_i}{\partial x} + \lambda C_i&\\[7pt]
= f_i(t,x,{\bf C}) + \lambda C_i,& t \in ]0,T],\ x \in ]0,L[, \\[7pt]
\ds \frac{\partial C_i}{\partial x}(t,0)=\ds \frac{\partial C_i}{\partial
x}(t,L)=0, & t \in ]0,T],\\[7pt]
C_i(0,x)=C^0(x), & x \in ]0,L[. \\
\end{array}
\right.
\end{equation}

For $N,N',P,P',Z,Z'$ and $D,D' \in H^1(0,L)$, we define
$$
\begin{array}{l}
a_N(t,N,N')= \ds \int_{0}^{L} d(t,x) \ds \frac{\partial N}{\partial
x} \ds \frac{\partial N'}{\partial x} + \lambda \int_{0}^{L} NN',\\[7pt]
a_P(t,P,P')= \ds \int_{0}^{L}  d(t,x) \ds \frac{\partial P}{\partial
x} \ds \frac{\partial P'}{\partial x}+\lambda \int_{0}^{L} PP',\\[7pt]
a_Z(t,Z,Z')= \ds \int_{0}^{L} d(t,x) \ds \frac{\partial Z}{\partial
x} \ds \frac{\partial Z'}{\partial x}+\lambda \int_{0}^{L} ZZ',\\[7pt]
a_D(t,D,D')= \ds \int_{0}^{L} d(t,x) \ds \frac{\partial D}{\partial
x} \ds \frac{\partial D'}{\partial x}+\int_{0}^{L} v_d \ds \frac{\partial D}{\partial
x}D' +\lambda \int_{0}^{L} DD',\\
\end{array}
$$ 
and
$$
a(t,{\bf C},{\bf C}')=a_N(t,N,N')+a_P(t,P,P')+a_Z(t,Z,Z')+a_D(t,D,D').
$$

\begin{lemma}
\label{lemma:formebi}
For a.e. $t \in [0,T]$, $a(t,{\bf C},{\bf C}')$ is a continuous
bilinear form on ${\bf H}^1 \times {\bf H}^1$. For all ${\bf C}$,
${\bf C}' \in {\bf H}^1$, $t \rightarrow a(t,{\bf C},{\bf C}')$ is
measurable and there exists a constant $M_a > 0$ such that,
$$
|a(t,{\bf C},{\bf C}')| \le M_a ||{\bf C}||_1
 ||{\bf C}'||_1, \quad \forall {\bf C},{\bf C}' \in {\bf H}^1.
$$
For a fixed $\lambda$, $\lambda \ge \ds \frac{v_d^2}{2d_0}$, there
exists a constant $c_0 > 0$ such that,
$$
a(t,{\bf C},{\bf C}) \ge c_0 ||{\bf C}||_1^2, \quad \forall
t\in[0,T], \quad \forall {\bf C} \in {\bf H}^1.
$$
\end{lemma}
\begin{proof}
The proof for this is classical and it is omited
\end{proof}

\subsection{The reaction terms and the nonlinear operator ${\bf G}$}
\label{section:G}
\noindent
In this paragraph we show that the reaction terms of the NPZD-model
enable us to define a continuous operator ${\bf G}$ on $L^2(0,T,{\bf H})$.

\begin{lemma}
\label{lemma:besogne}
The reaction terms $f_N$, $f_P$, $f_Z$ and $f_D$ defined in Section 2 
have the following properties:
\begin{itemize}
\item[(P1)] For a.e. $(t,x) \in [0,T] \times [0,L]$, and all ${\bf C} \in \Rr^4$,
$$
\begin{array}{l}
|f_N(t,x,{\bf C})| \le (\mu_p(1-\gamma) + \tau)|P| +(\mu_z + \tau)|Z|+(\mu_d + \tau)|D|,\\[4pt]
|f_P(t,x,{\bf C})| \le (\mu_p(1-\gamma) +m_p+ \tau)|P| +g_z|Z|,\\[4pt]
|f_Z(t,x,{\bf C})| \le ((a_p + a_d)g_z +m_z +\mu_z+ \tau)|Z|,\\[4pt]
|f_D(t,x,{\bf C})| \le (((1-a_p)+a_d)g_z+m_z)|Z| +m_p|P| +(\mu_d + \tau)|D|.\\  
 \end{array}
$$
\item[(P2)] The  function ${\bf f}(t,x,{\bf C})$, defined from $[0,T] \times [0,L]
\times \Rr^4 \rightarrow \Rr^4$, is measurable in $(t,x)$, for all
${\bf C} \in \Rr^4$, and is continuous in ${\bf C}$, for a.e. $(t,x) \in
[0,T] \times [0,L]$.
\end{itemize}
\end{lemma}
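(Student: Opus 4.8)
The plan is to treat the two assertions separately, since each follows directly from the structural properties of the nonlinearities already collected in Section~2; neither requires more than elementary estimates and the standard calculus of Carath\'eodory functions.

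For (P1) I would start from the explicit expressions (\ref{eqn:p2}) and apply the triangle inequality term by term, discarding the signs. The three genuinely nonlinear factors are then controlled by the uniform bounds recorded earlier: $0 \le L_I(t,x,P) \le 1$, $|L_N(N)| \le 1$, and $|G_P| \le g_z$, $|G_D| \le g_z$, all valid a.e. in $(t,x)$ and for every argument. After inserting these bounds, each product such as $\mu_p(1-\gamma)L_I L_N P$ is majorized by $\mu_p(1-\gamma)|P|$, and each grazing term $G_PZ$ or $G_DZ$ by $g_z|Z|$. The last point to exploit is that $\fc_{]0,l]}$ and $\fc_{]l,L[}$ are the indicators of two disjoint intervals partitioning $]0,L[$, so that for every $x \in\, ]0,L[$ exactly one of them equals $1$ and the other vanishes. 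Consequently the coefficient multiplying a given $|C_j|$ in $|f_i(t,x,{\bf C})|$ is, at each $x$, one of its two admissible values (the euphotic-zone value or the deep value involving $\tau$), and in particular is bounded by their sum. Collecting the coefficients of $|P|$, $|Z|$ and $|D|$ in this way reproduces exactly the four inequalities of (P1); this step is pure bookkeeping.

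For (P2) the idea is to recognize every building block of ${\bf f}$ as a Carath\'eodory function --- measurable in $(t,x)$ for each fixed ${\bf C}$ and continuous in ${\bf C}$ for a.e. $(t,x)$ --- and then to invoke the stability of this class under finite sums and products. The constants and the indicator functions $\fc_{]0,l]}, \fc_{]l,L[}$ depend only on $(t,x)$ and are measurable, hence Carath\'eodory; the factors $L_N(N)$, $G_P(P)$, $G_D(D)$ and the linear monomials $N,P,Z,D$ depend only on ${\bf C}$ and are continuous, hence Carath\'eodory; and $L_I$ is Carath\'eodory precisely by the two hypotheses listed in point~4 of Section~2 (measurability in $(t,x)$ for all $P$, continuity in $P$ for a.e. $(t,x)$). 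Since each $f_i$ in (\ref{eqn:p2}) is a finite sum of products of such factors, it remains only to note that a product of two Carath\'eodory functions is again one: for fixed ${\bf C}$ the product is a product of measurable functions of $(t,x)$, hence measurable, and for a.e. fixed $(t,x)$ it is a product of continuous functions of ${\bf C}$, hence continuous.

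The only term that genuinely mixes the two kinds of dependence is $L_I(t,x,P)\,L_N(N)\,P$, where the $(t,x)$-dependent nonlinearity $L_I$ is multiplied by ${\bf C}$-dependent factors; but the product rule just described applies verbatim, so this term causes no difficulty. I therefore do not expect any real obstacle in this lemma: (P1) is a one-line estimate repeated four times, and (P2) is the observation that ${\bf f}$ is assembled by algebraic operations from Carath\'eodory ingredients. The mild care needed is only in matching coefficients correctly in (P1) and in keeping track, in (P2), of which factor carries the $(t,x)$-measurability.
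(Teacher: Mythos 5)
Your proposal is correct and matches the paper's intent: the paper simply declares the proof ``straightforward'' and refers to the comments of Section~2.2, and your argument is precisely the expansion of that remark --- the triangle inequality combined with the bounds $0\le L_I\le 1$, $|L_N|\le 1$, $|G_P|,|G_D|\le g_z$ for (P1), and the stability of Carath\'eodory functions under sums and products for (P2). All four coefficient computations in (P1) check out against (\ref{eqn:p2}).
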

\begin{proof}
The proof is straightforward and uses the comments of
Section 2.2
\end{proof}

We now define a function ${\bf g}(t,x,{\bf C}) =
{\bf f}(t,x,{\bf C}) + \lambda {\bf C}$ from $[0,T] \times [0,L]
\times \Rr^4 \rightarrow \Rr^4$ and a nonlinear operator, ${\bf G}$, by:
$$
{\bf GC}={\bf g}(t,x,{\bf C}(t,x)), \quad (t,x) \in [0,T] \times [0,L].
$$

\begin{proposition}
\label{prop:biendefG}
The operator, ${\bf G}$, is well defined from $L^2(0,T,{\bf H})$ to
itself. There exists a constant $M_g > 0$, depending only on the
parameters of the model, such that, for all ${\bf C} \in \L^2(0,T,{\bf H})$
$$
||{\bf GC}||_{L^2(0,T,{\bf H})} \le M_g ||{\bf C}||_{L^2(0,T,{\bf H})}.  
$$
The operator ${\bf G}$ is continuous on $L^2(0,T,{\bf H})$.\\
\end{proposition}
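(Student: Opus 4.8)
The plan is to view ${\bf G}$ as a Nemytskii (superposition) operator and to exploit directly the two properties of ${\bf f}$ recorded in Lemma \ref{lemma:besogne}. I would first dispose of well-definedness and the norm estimate together, and then handle continuity separately by a dominated-convergence argument; throughout I identify $L^2(0,T,{\bf H})$ with $L^2((0,T)\times(0,L);\Rr^4)$ so that Fubini applies to the space-time integral.

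For well-definedness and the bound, I start from property (P1). Each $|f_i(t,x,{\bf C})|$ is dominated by a fixed linear combination of $|N|,|P|,|Z|,|D|$ with coefficients built from the model parameters, and since ${\bf g}={\bf f}+\lambda{\bf C}$, adding the term $\lambda{\bf C}$ merely enlarges those coefficients. Hence there is a constant $K>0$, depending only on the parameters and on $\lambda$, such that the Euclidean bound $|{\bf g}(t,x,{\bf C})|_{\Rr^4}\le K\,|{\bf C}|_{\Rr^4}$ holds for a.e. $(t,x)$ and all ${\bf C}\in\Rr^4$; the passage from the componentwise estimates of (P1) to this is the routine use of $(\sum_i|C_i|)^2\le 4\sum_i|C_i|^2$. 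By property (P2), ${\bf g}$ is a Carath\'eodory function, so for any measurable ${\bf C}(t,x)$ the composition $(t,x)\mapsto{\bf g}(t,x,{\bf C}(t,x))$ is measurable. Combining measurability with the pointwise bound and integrating yields
$$
||{\bf GC}||_{L^2(0,T,{\bf H})}^2=\int_0^T\!\!\int_0^L |{\bf g}(t,x,{\bf C}(t,x))|_{\Rr^4}^2\,dx\,dt\le K^2\,||{\bf C}||_{L^2(0,T,{\bf H})}^2,
$$
which at once shows ${\bf GC}\in L^2(0,T,{\bf H})$ and establishes the estimate with $M_g=K$.

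For continuity, I would take ${\bf C}_n\to{\bf C}$ in $L^2(0,T,{\bf H})$ and argue via the subsequence criterion: it suffices to show that every subsequence admits a further subsequence along which ${\bf GC}_{n_k}\to{\bf GC}$. Given a subsequence, the converse to the dominated convergence theorem (see \cite{Brezis:1992}) lets me extract a further subsequence, still written ${\bf C}_{n_k}$, with ${\bf C}_{n_k}\to{\bf C}$ a.e. and $|{\bf C}_{n_k}(t,x)|_{\Rr^4}\le h(t,x)$ a.e. for some fixed $h\in L^2((0,T)\times(0,L))$. Property (P2) then gives ${\bf g}(t,x,{\bf C}_{n_k})\to{\bf g}(t,x,{\bf C})$ a.e., while the growth bound supplies the domination $|{\bf g}(t,x,{\bf C}_{n_k})|_{\Rr^4}\le K\,|{\bf C}_{n_k}|_{\Rr^4}\le Kh\in L^2$. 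Dominated convergence then yields ${\bf GC}_{n_k}\to{\bf GC}$ in $L^2(0,T,{\bf H})$, and since the limit is independent of the subsequence, the full sequence converges.

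The measurability and the bound are routine bookkeeping with the constants from (P1). The one genuinely delicate point is the continuity step: pointwise continuity of ${\bf g}$ in ${\bf C}$ alone does not force continuity of the operator, and the crux is to manufacture the $L^2$ dominating function $h$ along an a.e.-convergent subsequence, after which dominated convergence closes the argument.
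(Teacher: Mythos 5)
Your argument is correct, and for the norm estimate it coincides with the paper's proof: both pass from property (P1) of Lemma \ref{lemma:besogne} to a pointwise linear growth bound on ${\bf g}={\bf f}+\lambda{\bf C}$ and integrate over $(0,T)\times(0,L)$ (the paper keeps the componentwise constants $cte_1,\dots,cte_4$ explicit where you absorb them into a single $K$, which is immaterial). The only real divergence is in the continuity step: the paper observes that ${\bf g}$ is a Carath\'eodory function and then simply invokes Theorem 2.1, p.~22 of Krasnosel'skii \cite{Krasnoselskii:1964} on the continuity of superposition operators, whereas you unfold that theorem's proof in this specific setting --- subsequence criterion, converse of dominated convergence to extract an a.e.-convergent subsequence with an $L^2$ dominating function $h$, pointwise convergence of ${\bf g}(t,x,{\bf C}_{n_k})$ by (P2), domination by $Kh$, and dominated convergence. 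What your version buys is self-containedness and transparency about exactly which hypotheses are used (the linear growth bound and the Carath\'eodory conditions, nothing more); what the citation buys is brevity. It is worth noting that your dominated-convergence machinery is precisely what the paper itself deploys later, in step (b.3) of Section 4.2, when passing to the limit in $({\bf G}_n{\bf C}_n,\phi)$ via the ``inverse Lebesgue theorem'' of Br\'ezis \cite{Brezis:1992}, so your proof is fully consistent with the paper's toolkit. One cosmetic remark: your constant $K$, hence $M_g$, depends on $\lambda$ as well as on the biological parameters, but since $\lambda$ is fixed once and for all in terms of $v_d$ and $d_0$ in Lemma \ref{lemma:formebi}, this matches the paper's own usage and is not a gap.
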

\begin{proof}
Let ${\bf C} \in L^2(0,T,{\bf H})$ 
and $t \in [0,T]$. From point $(P1)$ of lemma \ref{lemma:besogne} we obtain,
$$
\begin{array}{l}
||{\bf G}{\bf C}(t)||^2=\\[7pt] 
\ds \int_{0}^{L} |f_N(t,x,{\bf
C}(t,x)) + \lambda N(t,x)|^2 
+ |f_P(t,x,{\bf C}(t,x))+\lambda P(t,x)|^2\\[7pt]  
+ |f_Z(t,x,{\bf C}(t,x))+\lambda Z(t,x)|^2 + 
|f_D(t,x,{\bf C}(t,x))+\lambda D(t,x)|^2 dx,\\[7pt] 
 ||{\bf G}{\bf C}(t)||^2 \le \\[7pt]   
cte_1 ( ||P(t)||_{L^2(0,L)}^2 + ||Z(t)||_{L^2(0,L)}^2 +
||D(t)||_{L^2(0,L)}^2+ ||N(t)||_{L^2(0,L)}^2 )\\[7pt] 
 + cte_2 (||P(t)||_{L^2(0,L)}^2+||Z(t)||_{L^2(0,L)}^2+ ||P(t)||_{L^2(0,L)}^2 )\\[7pt] 
 +cte_3 (||Z(t)||_{L^2(0,L)}^2)\\[7pt] 
 + cte_4 (||Z(t)||_{L^2(0,L)}^2+||P(t)||_{L^2(0,L)}^2+||D(t)||_{L^2(0,L)}^2),\\[7pt] 
||{\bf G}{\bf C}(t)||^2 \le M_g^2 ||{\bf C}(t)||^2, 
\end{array}
$$
and integrating on $[0,T]$,
$$
||{\bf G}{\bf C}||_{L^2(0,T,{\bf H})}  \le M_g ||{\bf C}||_{L^2(0,T,{\bf H})}.
$$
From point ($P2$) of lemma \ref{lemma:besogne} we know that the function,
$$
{\bf g}(t,x,{\bf C}) = {\bf f}(t,x,{\bf C}) + \lambda {\bf C},
$$ 
from $[0,T] \times [0,L] \times \Rr^4 \rightarrow \Rr^4$, satisfies
the conditions of Carath\'eodory  and by theorem 2.1 page 22 of
Krasnosel'skii 
\cite{Krasnoselskii:1964}, we know that the operator ${\bf G}$ is continuous
\end{proof}

\subsection{Variational formulation}
We can now write the definition of a weak solution to system (\ref{eqn:p1}),
\begin{definition}
${\bf C} \in W({\bf H}^1)$ is a weak solution of system (\ref{eqn:p1}) if 
$$
\forall \phi \in {\bf H}^1, \quad (\ds \frac{d {\bf C}}{dt}, {\bf \phi}) +
a(t,{\bf C},{\bf \phi}) = ({\bf GC},{\bf \phi}),
$$ 
in the ${\mathcal{D}}'(]0,T[)$ sens,\\ 
and ${\bf C}(0)={\bf C}^0$. 
\end{definition}
and state the main result of this paper, 
\begin{theorem}
\label{theo:main}
Let ${\bf C}^0 \in {\bf H}$. There exists a weak solution to system
(\ref{eqn:p1}). 
Furthermore, if $N^0,P^0,Z^0$ and $D^0$ are positive then $N,P,Z$ and $D$ are
positive for a.e. $t \in [0,T]$. 
\end{theorem}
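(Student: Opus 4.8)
The plan is to obtain existence by a Schauder fixed-point argument applied to a truncated (approximate) version of the system, to derive bounds uniform in the truncation and pass to the limit, and finally to establish positivity by testing each equation against the negative part of its own solution. First I would linearize: for fixed ${\bf C}\in L^2(0,T,{\bf H})$ I consider the problem of finding ${\bf U}\in W({\bf H}^1)$ with ${\bf U}(0)={\bf C}^0$ and $(\frac{d{\bf U}}{dt},\phi)+a(t,{\bf U},\phi)=({\bf GC},\phi)$ for all $\phi\in{\bf H}^1$. Since $a(t,\cdot,\cdot)$ is continuous, measurable in $t$ and coercive (Lemma \ref{lemma:formebi}) and ${\bf GC}\in L^2(0,T,{\bf H})$ (Proposition \ref{prop:biendefG}), the classical Lions theory for linear parabolic equations yields a unique such ${\bf U}$, defining a map $\Phi:{\bf C}\mapsto{\bf U}$. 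Because ${\bf G}$ has only linear growth, $\Phi$ need not send a fixed ball of $L^2(0,T,{\bf H})$ into itself for arbitrary $T$; this is precisely why I would replace ${\bf g}$ by a truncation ${\bf g}_n(t,x,{\bf C})={\bf g}(t,x,T_n{\bf C})$, where $T_n$ caps each component at level $n$. The associated operator ${\bf G}_n$ is then bounded on $L^2(0,T,{\bf H})$ by a constant depending only on $n$, so the corresponding map $\Phi_n$ sends the whole space into one fixed ball; as $\Phi_n$ takes values in $W({\bf H}^1)$, which embeds compactly into $L^2(0,T,{\bf H})$ (Lemma \ref{lemma:compacite}), $\Phi_n$ is compact, and it is continuous by Proposition \ref{prop:biendefG}. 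Schauder's theorem then produces a fixed point ${\bf C}_n$, a weak solution of the truncated model.

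Next I would obtain estimates uniform in $n$. Testing the equation for ${\bf C}_n$ with ${\bf C}_n$ and using the bounds (P1) of Lemma \ref{lemma:besogne} — which the truncated nonlinearity still satisfies since $|T_n s|\le|s|$ — gives $\frac{1}{2}\frac{d}{dt}\|{\bf C}_n\|^2+c_0\|{\bf C}_n\|_1^2\le C\|{\bf C}_n\|^2$; Gronwall's lemma bounds ${\bf C}_n$ in $L^\infty(0,T,{\bf H})\cap L^2(0,T,{\bf H}^1)$ and then $\frac{d{\bf C}_n}{dt}$ in $L^2(0,T,({\bf H}^1)')$, all independently of $n$. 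By Lemma \ref{lemma:compacite} a subsequence converges strongly in $L^2(0,T,{\bf H})$ (hence a.e.) and weakly in $W({\bf H}^1)$ to a limit ${\bf C}$. Along this subsequence $T_n{\bf C}_n\to{\bf C}$ a.e., so by the Carath\'eodory continuity of ${\bf g}$ (point (P2)) and the uniform $L^2$ bound on ${\bf G}_n{\bf C}_n$ one gets ${\bf G}_n{\bf C}_n\rightharpoonup{\bf GC}$ weakly in $L^2(0,T,{\bf H})$. Passing to the limit in the linear identity — the bilinear term through weak convergence in $L^2(0,T,{\bf H}^1)$, the time-derivative term through weak convergence in $L^2(0,T,({\bf H}^1)')$, and the initial condition through $W({\bf H}^1)\subset C([0,T],{\bf H})$ — shows that ${\bf C}$ is a weak solution of (\ref{eqn:p1}).

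For positivity I would work with the equivalent shifted system (\ref{eqn:p1equi}) and test the $i$-th equation with $-C_i^-$, where $C_i^-=\max(-C_i,0)$ and ${\bf C}^-=(C_1^-,\dots,C_4^-)$; this is admissible since $C_i(t)\in H^1(0,L)$ for a.e. $t$. Using $(\frac{dC_i}{dt},-C_i^-)=\frac{1}{2}\frac{d}{dt}\|C_i^-\|^2$, the block structure of $a$, and the disjoint supports of $C_i^+$ and $C_i^-$, one finds $\sum_{i=1}^{4} a_i(t,C_i,-C_i^-)=a(t,{\bf C}^-,{\bf C}^-)\ge c_0\|{\bf C}^-\|_1^2\ge0$, the detritus advection being absorbed by $\lambda$ exactly as in Lemma \ref{lemma:formebi}. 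Summing over $i$ therefore gives
\[
\frac{1}{2}\frac{d}{dt}\sum_{i=1}^{4}\|C_i^-\|^2 \le \sum_{i=1}^{4}\int_{\{C_i<0\}} f_i(t,x,{\bf C})\,C_i\,dx + \lambda\sum_{i=1}^{4}\|C_i^-\|^2 .
\]
The goal is then to bound the reaction sum by $\Psi(t)\sum_{i}\|C_i^-\|^2$ with $\Psi\in L^1(0,T)$, after which Gronwall with the integrable coefficient $2\lambda+2\Psi$ and the vanishing initial data $\sum_i\|C_i^-(0)\|^2=0$ forces $\sum_i\|C_i^-(t)\|^2\equiv0$, i.e. $N,P,Z,D\ge0$ a.e. Here the quasi-positivity of ${\bf f}$ is decisive: each $f_i$ is nonnegative on $\{C_i=0\}$ when the other components are nonnegative (at $N=0$ one uses $L_N=N/(k_n+|N|)=0$; at $P=0,Z=0,D=0$ the relevant terms carry a factor $P$, $Z$, $G_P$ or $G_D$ that vanishes), so on $\{C_i<0\}$ the product $f_iC_i$ splits into genuinely nonpositive terms and coupling terms of the form $c\,C_j^-C_i^-$ absorbed by Young's inequality.

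I expect the main obstacle to be not the fixed-point construction but the control of the two non-sign-definite grazing terms $-G_PZ$ in $f_P$ and $-a_dG_DZ$ in $f_D$. Because $G_P=g_zP^2/(k_z+P^2)$ and $G_D$ remain strictly positive even for $P,D<0$, these produce on $\{P<0\}$ (resp. $\{D<0\}$) the nonnegative contributions $\frac{g_z(P^-)^3}{k_z+(P^-)^2}Z^+$ (resp. the analogue with $D^-$), which are not dominated by the negative-part norms alone. I would tame them with the elementary bound $\frac{(P^-)^3}{k_z+(P^-)^2}\le\frac{(P^-)^2}{2\sqrt{k_z}}$ combined with the continuous embedding ${\bf H}^1\subset{\bf L}^{\infty}$ of Lemma \ref{lemma:injection}, which yields $Z^+\le\|Z(t)\|_{\infty}\le c\,\|{\bf C}(t)\|_1$; since ${\bf C}\in L^2(0,T,{\bf H}^1)$ gives $\|{\bf C}(\cdot)\|_1\in L^2(0,T)\subset L^1(0,T)$, the resulting coefficient $\Psi(t)=c'\,\|{\bf C}(t)\|_1$ is integrable, closing the Gronwall argument. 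This use of the $L^\infty$ embedding to convert a quadratic-in-negative-part estimate with an unbounded prefactor into a genuinely integrable-in-time coefficient is the delicate step on which the positivity claim rests.
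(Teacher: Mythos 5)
Your proposal is correct, and the existence half follows essentially the same architecture as the paper: a linearized solution operator, a truncation of the nonlinearity so that Schauder applies on a fixed ball of $L^2(0,T,{\bf H})$ via the compact embedding $W({\bf H}^1)\subset L^2(0,T,{\bf H})$, then $n$-uniform energy estimates from (P1), Gronwall, and identification of the nonlinear limit through a.e.\ convergence (the paper truncates the \emph{output}, setting $({\bf g}_n)_i=({\bf g})_i/(1+\frac{1}{n}|({\bf g})_i|)$, whereas you truncate the \emph{input}; both yield the uniform bound $\|{\bf G}_n{\bf C}\|\le M_g\|{\bf C}\|$ and the a.e.\ convergence ${\bf g}_n(t,x,{\bf C}_n)\to{\bf g}(t,x,{\bf C})$, so this is immaterial). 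Where you genuinely diverge is the positivity proof. The paper treats the four equations \emph{sequentially} in the order $Z$, $P$, $D$, $N$: the $Z$-equation is self-contained, and each subsequent equation uses the already-established positivity of the earlier components to discard the coupling terms outright ($m_pPD^-\ge 0$ once $P\ge0$, $\mu_zZN^-+\mu_dDN^-\ge0$ once $Z,D\ge0$, etc.). You instead run a single Gronwall inequality on $\sum_i\|C_i^-\|^2$ and absorb the sign-indefinite couplings as terms $c\,C_j^-C_i^-$ via Young's inequality; this costs a few extra (easily checked) cross-term estimates — e.g.\ the production term in $f_N$ gives $\mu_p(1-\gamma)L_I\frac{(N^-)^2}{k_n+N^-}P^-\le \mu_p(1-\gamma)N^-P^-$ — but buys independence from the ordering, which the paper itself identifies in its conclusion as the delicate point for larger systems ("the right order has to be found"). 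Both routes hinge on exactly the two estimates you single out: $|x|/(k_z+x^2)\le 1/(2\sqrt{k_z})$ to tame the grazing terms $G_PZ$ and $G_DZ$ on $\{P<0\}$ and $\{D<0\}$, and the embedding $H^1(0,L)\subset L^\infty(0,L)$ giving $\|Z(\cdot)\|_\infty\in L^2(0,T)\subset L^1(0,T)$ as an integrable Gronwall coefficient. The only cosmetic imprecision is attributing the continuity of the fixed-point map to Proposition 3.1 alone; one also needs the Lipschitz dependence of the linear solution operator on its right-hand side, which the paper isolates as Lemma 4.1, but this is standard and clearly what you intend.
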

The proof is given in the next two sections.

\vspace*{1pt}\textlineskip	
\section{Existence}
\vspace*{-0.5pt}
\noindent
The existence result is obtained in two steps. We first define an
approximate problem, in which the operator ${\bf G}$ is approximated 
by an operator ${\bf G}_n$. This approximate problem is solved
using the Schauder fixed-point theorem. In the second step we let $n \rightarrow
\infty$ to obtain a solution to the initial problem.  
 
\subsection{Step 1: approximated problem}
\noindent
Let $n > 0$ be a fixed integer and ${\bf g}_n$ be defined by,
$$
\begin{array}{lll}
{\bf g}_n: & [0,T] \times [0,L] \times \Rr^4 & \rightarrow \Rr^4\\
& (t,x,{\bf C}) & \rightarrow \ds (\frac{({\bf g}(t,x,{\bf C}))_i}
{1+ \frac{1}{n} |({\bf g}(t,x,{\bf C}))_i|})_{i=1, ...4}.
\end{array}
$$
Define the nonlinear operator, ${\bf G}_n$, by:
$$
{\bf G}_n{\bf C}={\bf g}_n(t,x,{\bf C}(t,x)), \quad (t,x) \in [0,T] \times [0,L].
$$

\begin{proposition}
\label{prop:biendef2}
The operator, ${\bf G}_n$, is well defined from $L^2(0,T,{\bf H})$
to itself and there exists a constant, $M_g > 0$, such that 
for all ${\bf C} \in \L^2(0,T,{\bf H})$,
$$
||{\bf G}_n{\bf C}||_{L^2(0,T,{\bf H})} \le M_g ||{\bf C}||_{L^2(0,T,{\bf H})}.  
$$
The operator ${\bf G}_n$ is continuous on $L^2(0,T,{\bf H})$.\\
For all ${\bf C} \in L^2(0,T,{\bf H})$, we also have the estimation,
$$
||{\bf G}_n{\bf C}||_{L^2(0,T,{\bf H})} 
\le 2n\sqrt{LT}.
$$
\end{proposition}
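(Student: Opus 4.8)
The plan is to exploit the fact that ${\bf g}_n$ is obtained from ${\bf g}$ by composing each component with the scalar saturation map $\phi_n(s) = \ds \frac{s}{1 + \frac{1}{n}|s|}$, and that $\phi_n$ enjoys two elementary bounds: since its denominator is $\ge 1$ we have $|\phi_n(s)| \le |s|$ for all $s \in \Rr$, while since $\ds \frac{|s|}{1+|s|/n} = \ds \frac{n|s|}{n+|s|} < n$ we also have $|\phi_n(s)| \le n$ for all $s \in \Rr$. The whole proposition follows from these two inequalities together with the properties of ${\bf G}$ already established in Proposition \ref{prop:biendefG}.

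First I would check that ${\bf g}_n$ inherits the Carath\'eodory structure of ${\bf g}$. By point $(P2)$ of Lemma \ref{lemma:besogne} the map ${\bf g} = {\bf f} + \lambda {\bf C}$ is measurable in $(t,x)$ and continuous in ${\bf C}$; since $\phi_n$ is continuous on $\Rr$, each component $({\bf g}_n)_i = \phi_n(({\bf g})_i)$ is again measurable in $(t,x)$ and continuous in ${\bf C}$, so ${\bf g}_n$ satisfies the conditions of Carath\'eodory. Next, from $|({\bf g}_n(t,x,{\bf C}))_i| = |\phi_n(({\bf g})_i)| \le |({\bf g})_i|$ componentwise I obtain the pointwise domination $\ds \sum_{i=1}^4 |({\bf g}_n)_i|^2 \le \sum_{i=1}^4 |({\bf g})_i|^2$, whence $||{\bf G}_n{\bf C}(t)|| \le ||{\bf G}{\bf C}(t)||$ for a.e. $t$. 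Integrating over $[0,T]$ and invoking Proposition \ref{prop:biendefG} gives at once that ${\bf G}_n{\bf C} \in L^2(0,T,{\bf H})$, so that ${\bf G}_n$ is well defined from $L^2(0,T,{\bf H})$ into itself, and the bound $||{\bf G}_n{\bf C}||_{L^2(0,T,{\bf H})} \le ||{\bf G}{\bf C}||_{L^2(0,T,{\bf H})} \le M_g ||{\bf C}||_{L^2(0,T,{\bf H})}$ with the very same constant $M_g$. Continuity of ${\bf G}_n$ on $L^2(0,T,{\bf H})$ then follows exactly as for ${\bf G}$: since ${\bf g}_n$ is a Carath\'eodory function generating a Nemytskii operator on $L^2$, Theorem 2.1 page 22 of Krasnosel'skii \cite{Krasnoselskii:1964} applies verbatim.

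For the remaining estimate I would use the second bound on $\phi_n$. Because $|({\bf g}_n(t,x,{\bf C}))_i| \le n$ for each $i = 1,\ldots,4$, the integrand is controlled uniformly in $(t,x)$ and ${\bf C}$ by $\ds \sum_{i=1}^4 |({\bf g}_n)_i|^2 \le 4n^2$, so that
$$
||{\bf G}_n{\bf C}(t)||^2 = \int_0^L \sum_{i=1}^4 |({\bf g}_n(t,x,{\bf C}(t,x)))_i|^2\, dx \le 4n^2 L .
$$
Integrating over $[0,T]$ gives $||{\bf G}_n{\bf C}||_{L^2(0,T,{\bf H})}^2 \le 4n^2 LT$, that is $||{\bf G}_n{\bf C}||_{L^2(0,T,{\bf H})} \le 2n\sqrt{LT}$, as claimed.

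There is no serious obstacle here: the whole point of introducing ${\bf G}_n$ is that the saturation renders it globally bounded (the $2n\sqrt{LT}$ estimate) while preserving the same linear-growth control as ${\bf G}$. The only step requiring a little care is the observation that composing with the continuous map $\phi_n$ does not destroy the Carath\'eodory property on which the Krasnosel'skii continuity argument rests. The uniform bound $2n\sqrt{LT}$ is precisely what will later let the Schauder fixed-point theorem be applied to the approximate problem.
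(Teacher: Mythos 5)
Your proof is correct and follows essentially the same route as the paper: the bound $|\phi_n(s)|\le|s|$ gives domination by ${\bf G}$ and hence the $M_g$ estimate via Proposition \ref{prop:biendefG}, the bound $|\phi_n(s)|\le n$ gives the uniform estimate $2n\sqrt{LT}$, and continuity follows from the Carath\'eodory conditions and Krasnosel'skii's theorem exactly as in the paper. You merely spell out the elementary computations that the paper leaves as "derived easily from the choice made to define ${\bf g}_n$."
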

\begin{proof}
Let ${\bf C} \in L^2(0,T,{\bf H})$. From the definition of
${\bf g}_n$ 
and from proposition \ref{prop:biendefG} we obtain,
$$
||{\bf G}_n{\bf C}||_{L^2(0,T,{\bf H})} 
\le ||{\bf G}{\bf C}||_{L^2(0,T,{\bf H})} \le M_g ||{\bf C}||_{L^2(0,T,{\bf H})}. 
$$ 
The estimation $||{\bf G}_n{\bf C}||_{L^2(0,T,{\bf H})} 
\le 2n\sqrt{LT}$ is also derived easily from the choice made to define
${\bf g}_n$.\\
As in proposition \ref{prop:biendefG}, ${\bf g}_n$
satisfies the conditions of Carath\'eodory and ${\bf G}_n$ is continuous
on $L^2(0,T,{\bf H})$
\end{proof}

We now seek a solution to the approximated system and show that such a
solution is a fixed-point of the operator $\Theta$ defined in the next
proposition.

\begin{proposition}
\label{prop:theta}
Let $\hat{{\bf C}}$ be a fixed element of $L^2(0,T,{\bf H})$ and let
${\bf C}^0 \in {\bf H}$. There exists a unique solution to the problem:\\
find ${\bf C} \in W({\bf H}^1)$ such that, 
$$
\forall \phi \in {\bf H}^1, \quad (\ds \frac{d {\bf C}}{dt}, {\bf \phi}) +
a(t,{\bf C},{\bf \phi}) = ({\bf G}_n\hat{{\bf C}},{\bf \phi}),
$$ 
in the ${\mathcal{D}}'(]0,T[)$ sens,\\ 
and ${\bf C}(0)={\bf C}^0$.\\
This solution defines an operator $\Theta$ on
$L^2(0,T,{\bf H})$, 
$\Theta\hat{{\bf C}}={\bf C}$.
\end{proposition}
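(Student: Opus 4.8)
The plan is to observe that, since $\hat{{\bf C}}$ is held fixed, the forcing ${\bf G}_n\hat{{\bf C}}$ is a \emph{fixed} element of $L^2(0,T,{\bf H})$, so the problem for ${\bf C}$ is a \emph{linear} abstract parabolic Cauchy problem in the Gelfand triple ${\bf H}^1 \subset {\bf H} = {\bf H}' \subset ({\bf H}^1)'$. The whole statement is then an instance of the classical existence--uniqueness theorem for linear parabolic equations in variational form (Lions \cite{Lions:1969}, Dautray and Lions \cite{Dautray:1988b}), and the only real work is to check that its hypotheses are the ones already proved in the preceding lemmas.

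First I would record that the data are admissible. By Proposition \ref{prop:biendef2}, ${\bf G}_n\hat{{\bf C}} \in L^2(0,T,{\bf H})$; composing with the continuous injection ${\bf H} = {\bf H}' \subset ({\bf H}^1)'$ shows ${\bf G}_n\hat{{\bf C}} \in L^2(0,T,({\bf H}^1)')$, which is the natural space for the right-hand side. Lemma \ref{lemma:formebi} then supplies exactly the three properties of the form $a$ that the abstract theorem requires: for a.e.\ $t$ it is bilinear and continuous on ${\bf H}^1 \times {\bf H}^1$ with bound $M_a$; the map $t \mapsto a(t,{\bf C},{\bf C}')$ is measurable; and, with $\lambda$ fixed so that $\lambda \ge v_d^2/(2d_0)$, it is coercive, $a(t,{\bf C},{\bf C}) \ge c_0\|{\bf C}\|_1^2$. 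With these in hand the existence of ${\bf C} \in L^2(0,T,{\bf H}^1)$ with $d{\bf C}/dt \in L^2(0,T,({\bf H}^1)')$ satisfying the variational identity and ${\bf C}(0)={\bf C}^0$ follows directly. If a self-contained argument is preferred, I would run a Galerkin scheme against a basis of the separable space ${\bf H}^1$: the projected system is a linear ODE with $L^2$-in-time coefficients, solvable in the Carath\'eodory sense; testing the approximation against itself and invoking coercivity yields a priori bounds in $L^\infty(0,T,{\bf H}) \cap L^2(0,T,{\bf H}^1)$ and hence on the time derivative in $L^2(0,T,({\bf H}^1)')$; the passage to the limit is trivial because the equation is linear and the forcing is fixed, so no compactness of a nonlinearity is needed, and the limit lies in $W({\bf H}^1)$. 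The embedding $W({\bf H}^1) \subset C([0,T],{\bf H})$ established in Section 3 makes the trace ${\bf C}(0)$ meaningful and lets one recover the initial condition.

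Uniqueness is the cleanest piece and the step I would write out explicitly: if ${\bf C}_1,{\bf C}_2$ are two solutions, their difference $\delta = {\bf C}_1 - {\bf C}_2$ lies in $W({\bf H}^1)$ and solves the homogeneous equation with $\delta(0)=0$. Testing with $\delta$ and using $\frac{1}{2}\frac{d}{dt}\|\delta\|^2 = \langle \frac{d\delta}{dt},\delta\rangle$ together with coercivity gives $\frac{1}{2}\frac{d}{dt}\|\delta(t)\|^2 + c_0\|\delta(t)\|_1^2 \le 0$, so $t \mapsto \|\delta(t)\|^2$ is nonincreasing and vanishes at $t=0$; hence $\delta \equiv 0$. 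Uniqueness is what makes the assignment $\Theta\hat{{\bf C}} = {\bf C}$ a well-defined map on $L^2(0,T,{\bf H})$.

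I do not expect a genuine obstacle here, precisely because freezing $\hat{{\bf C}}$ removes the nonlinearity: the content reduces to the standard linear theory, whose hypotheses are all handed over by Lemma \ref{lemma:formebi} and Proposition \ref{prop:biendef2}. The only mild subtlety worth flagging is bookkeeping on the spaces --- keeping the forcing viewed in $L^2(0,T,({\bf H}^1)')$ rather than merely in $L^2(0,T,{\bf H})$ so that the duality pairing and the trace theorem apply --- and the choice of $\lambda$, which must be the same threshold $\lambda \ge v_d^2/(2d_0)$ used in Lemma \ref{lemma:formebi} to secure coercivity in the presence of the advection term $v_d\,\partial D/\partial x$.
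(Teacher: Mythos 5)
Your proposal is correct and follows the same route as the paper, which simply observes that with $\hat{{\bf C}}$ frozen the problem is linear with a fixed right-hand side in $L^2(0,T,{\bf H})$ and invokes the classical variational theory of Dautray and Lions. The extra detail you supply (verifying the hypotheses via Lemma \ref{lemma:formebi} and Proposition \ref{prop:biendef2}, the Galerkin sketch, and the energy-estimate uniqueness argument) is exactly what that citation compresses.
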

\begin{proof}
Since the problem is linear in ${\bf C}$ and
${\bf G}_n\hat{{\bf C}}$ is fixed in $L^2(0,T,{\bf H})$, the proof
is classical (e.g. Dautray and Lions, \cite{Dautray:1988b})
\end{proof}

To insure that $\Theta$ has a fixed point, we show that the Schauder
fixed point theorem can be applied.

\begin{lemma}
\label{lemma:thetacontinu}
The operator $\Theta$ is continuous on $L^2(0,T,{\bf H})$.
\end{lemma}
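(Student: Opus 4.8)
The goal is to show that $\Theta : L^2(0,T,{\bf H}) \to L^2(0,T,{\bf H})$, which sends $\hat{{\bf C}}$ to the unique solution ${\bf C}$ of the linear variational problem with right-hand side $({\bf G}_n\hat{{\bf C}}, \phi)$, is continuous. The natural strategy is to exploit linearity: if $\hat{{\bf C}}_1, \hat{{\bf C}}_2 \in L^2(0,T,{\bf H})$ produce solutions ${\bf C}_1 = \Theta\hat{{\bf C}}_1$ and ${\bf C}_2 = \Theta\hat{{\bf C}}_2$, then the difference ${\bf W} = {\bf C}_1 - {\bf C}_2$ satisfies the same variational problem with right-hand side $({\bf G}_n\hat{{\bf C}}_1 - {\bf G}_n\hat{{\bf C}}_2, \phi)$ and with zero initial datum, since both solutions start from the same ${\bf C}^0$. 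The whole estimate thus reduces to bounding $\|{\bf W}\|$ in terms of $\|{\bf G}_n\hat{{\bf C}}_1 - {\bf G}_n\hat{{\bf C}}_2\|_{L^2(0,T,{\bf H})}$, after which continuity of ${\bf G}_n$ (Proposition \ref{prop:biendef2}) finishes the argument by composition.

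The core energy estimate is where the coercivity of Lemma \ref{lemma:formebi} does the work. First I would take $\phi = {\bf W}(t)$ as test function in the equation for ${\bf W}$, using that $\langle \frac{d{\bf W}}{dt}, {\bf W}\rangle = \frac{1}{2}\frac{d}{dt}\|{\bf W}(t)\|^2$ for elements of $W({\bf H}^1)$. This gives
$$
\frac{1}{2}\frac{d}{dt}\|{\bf W}(t)\|^2 + a(t,{\bf W},{\bf W}) = ({\bf G}_n\hat{{\bf C}}_1 - {\bf G}_n\hat{{\bf C}}_2, {\bf W}(t)).
$$
Applying the coercivity bound $a(t,{\bf W},{\bf W}) \ge c_0\|{\bf W}\|_1^2 \ge c_0\|{\bf W}\|^2$ on the left and a Cauchy--Schwarz estimate followed by Young's inequality on the right, then integrating in time from $0$ to $t$ and using ${\bf W}(0) = 0$, I would obtain a bound of the form $\|{\bf W}\|_{L^2(0,T,{\bf H})} \le K\,\|{\bf G}_n\hat{{\bf C}}_1 - {\bf G}_n\hat{{\bf C}}_2\|_{L^2(0,T,{\bf H})}$ for a constant $K$ depending only on $c_0$ and $T$. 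Combining this linear-in-the-difference estimate with the continuity of ${\bf G}_n$ on $L^2(0,T,{\bf H})$ yields the continuity of $\Theta$, since $\Theta = \Theta_{\mathrm{lin}} \circ {\bf G}_n$ with $\Theta_{\mathrm{lin}}$ Lipschitz.

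The main obstacle is justifying the formal test-function computation rigorously. The identity $\langle \frac{d{\bf W}}{dt}, {\bf W}\rangle = \frac{1}{2}\frac{d}{dt}\|{\bf W}\|^2$ and the resulting integration by parts in time are valid for ${\bf W} \in W({\bf H}^1)$ precisely because of the second lemma in the excerpt, which identifies elements of $W({\bf H}^1)$ with continuous ${\bf H}$-valued functions and underlies the standard integration-by-parts formula in this Gelfand-triple setting. I would invoke that lemma (or cite Dautray and Lions, as the paper does elsewhere) rather than reprove the identity. A secondary point to handle cleanly is that the variational equation holds only in ${\mathcal{D}}'(]0,T[)$, so the pointwise-in-$t$ manipulation should be understood in the integrated sense; this is routine once the energy identity is available, and it is the only place where care is genuinely needed.
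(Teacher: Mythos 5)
Your proposal is correct and follows essentially the same route as the paper: subtract the two solutions, test with the difference, use coercivity of $a$, Cauchy--Schwarz and Young's inequality, integrate in time using the common initial datum, and conclude from the resulting Lipschitz estimate $\|\Theta\hat{{\bf C}}^1-\Theta\hat{{\bf C}}^2\|_{L^2(0,T,{\bf H})} \le \sqrt{T/(2c_0)}\,\|{\bf G}_n\hat{{\bf C}}^1-{\bf G}_n\hat{{\bf C}}^2\|_{L^2(0,T,{\bf H})}$ together with the continuity of ${\bf G}_n$. Your extra remark on justifying the energy identity via $W({\bf H}^1)\subset C([0,T],{\bf H})$ is a point the paper leaves implicit but does not change the argument.
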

\begin{proof}
Let
$\hat{{\bf C}}^1$ and $\hat{{\bf C}}^2 \in L^2(0,T,{\bf H})$. 
${\bf C}^1$ and ${\bf C}^2$, the associated solutions to the problem
of proposition \ref{prop:theta}, satisfy,
$$
(\ds \frac{d}{dt} ({\bf C}^1-{\bf C}^2)
, {\bf \phi}) +
a(t,{\bf C}^1-{\bf C}^2,{\bf \phi})= 
({\bf G}_n\hat{{\bf C}}^1-{\bf G}_n\hat{{\bf C}}^2,{\bf \phi}).
$$
Taking ${\bf \phi}={\bf C}^1-{\bf C}^2$ as a test function,
integrating on $[0,t]$, using the coerciveness of $a$ and Cauchy-Schwarz
inequality, we obtain,
$$
\begin{array}{l}
\ds \int_0^t \frac{1}{2}\frac{d }{dt}||{\bf C}^1(s)-{\bf C}^2(s)||^2 + c_0
||{\bf C}^1(s)-{\bf C}^2(s)||_1^2 ds \\
\le \ds \int_0^t
||{\bf G}_n\hat{{\bf C}}^1(s)-{\bf G}_n\hat{{\bf C}}^2(s)||||{\bf C}^1(s)-{\bf C}^2(s)||ds.
\end{array}
$$
As ${\bf C}^1(0)={\bf C}^2(0)={\bf C}^0$ we obtain using Young's inequality,
$$
\begin{array}{l}
||{\bf C}^1(t)-{\bf C}^2(t)||^2 + \ds \int_0^t 2c_0
||{\bf C}^1(s)-{\bf C}^2(s)||_1^2 ds 
\\
\le \ds \int_0^t\frac{1}{\alpha}
||{\bf G}_n\hat{{\bf C}}^1(s)-{\bf G}_n\hat{{\bf C}}^2(s)||^2 ds 

+ \alpha \ds \int_0^t ||{\bf C}^1(s)-{\bf C}^2(s)||^2 ds,
\end{array}
$$
and with $\alpha = 2c_0$,
$$
||{\bf C}^1(t)-{\bf C}^2(t)||^2 \le \ds \int_0^T\frac{1}{2c_0}
||{\bf G}_n\hat{{\bf C}}^1(s)-{\bf G}_n\hat{{\bf C}}^2(s)||^2 ds.
$$
Eventually, we obtain, integrating on $[0,T]$, 
$$
||\Theta\hat{{\bf C}}^1-\Theta\hat{{\bf C}}^2||_{L^2(0,T,{\bf H})}= 
||{\bf C}^1-{\bf C}^2||_{L^2(0,T,{\bf H})} 
\le \ds \sqrt{\frac{T}{2c_0}}
||{\bf G}_n\hat{{\bf C}}^1-{\bf G}_n\hat{{\bf C}}^2||_{L^2(0,T,{\bf H})} 
$$
and $\Theta$ is continuous as ${\bf G}_n$ is
\end{proof}

\begin{lemma}
\label{lemma:thetaborne}
The operator $\Theta$ maps $L^2(0,T,{\bf H})$ in the ball
$$
B=\lbrace {\bf C} \in
L^2(0,T,{\bf H})
, ||{\bf C}||_{L^2(0,T,{\bf H})} \le \ds \sqrt{T(\ds \frac{2LTn^2}{c_0} +
||{\bf C}^0||^2)} \rbrace.
$$
In particular, we have, $\Theta(B) \subset B$.
\end{lemma}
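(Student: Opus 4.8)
The plan is to establish an a priori energy estimate on ${\bf C} = \Theta\hat{{\bf C}}$ that is \emph{uniform} in $\hat{{\bf C}}$, the uniformity coming from the $\hat{{\bf C}}$-independent bound $||{\bf G}_n\hat{{\bf C}}||_{L^2(0,T,{\bf H})} \le 2n\sqrt{LT}$ of Proposition \ref{prop:biendef2}. This is precisely the payoff of truncating ${\bf G}$ to ${\bf G}_n$: because the forcing is bounded independently of $\hat{{\bf C}}$, the solution estimate will be too, so all images $\Theta\hat{{\bf C}}$ can be made to lie in a single ball.

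First I would test the weak equation defining ${\bf C} = \Theta\hat{{\bf C}}$ with $\phi = {\bf C}(t)$; this is legitimate since ${\bf C} \in L^2(0,T,{\bf H}^1)$, so ${\bf C}(t) \in {\bf H}^1$ for a.e. $t$. Using the embedding $W({\bf H}^1) \subset C([0,T],{\bf H})$ and the associated integration-by-parts formula (Dautray and Lions \cite{Dautray:1988b}), the term $(\frac{d{\bf C}}{dt},{\bf C})$ becomes $\frac12 \frac{d}{dt}||{\bf C}(t)||^2$, so that
$$
\frac12 \frac{d}{dt}||{\bf C}(t)||^2 + a(t,{\bf C}(t),{\bf C}(t)) = ({\bf G}_n\hat{{\bf C}}(t),{\bf C}(t)).
$$
I would then apply the coercivity of Lemma \ref{lemma:formebi}, $a(t,{\bf C},{\bf C}) \ge c_0 ||{\bf C}||_1^2 \ge c_0 ||{\bf C}||^2$, and bound the right-hand side with Cauchy--Schwarz and Young's inequality, $({\bf G}_n\hat{{\bf C}},{\bf C}) \le \frac{1}{4c_0}||{\bf G}_n\hat{{\bf C}}||^2 + c_0 ||{\bf C}||^2$. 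The two copies of $c_0||{\bf C}||^2$ cancel, leaving the clean differential inequality $\frac{d}{dt}||{\bf C}(t)||^2 \le \frac{1}{2c_0}||{\bf G}_n\hat{{\bf C}}(t)||^2$.

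Integrating on $[0,t]$ with ${\bf C}(0)={\bf C}^0$, and using $\int_0^t ||{\bf G}_n\hat{{\bf C}}(s)||^2 ds \le ||{\bf G}_n\hat{{\bf C}}||_{L^2(0,T,{\bf H})}^2 \le 4n^2 LT$, gives the pointwise bound $||{\bf C}(t)||^2 \le ||{\bf C}^0||^2 + \frac{2LTn^2}{c_0}$, valid for every $t \in [0,T]$. Since the right-hand side is a constant, integrating once more over $[0,T]$ yields $||{\bf C}||_{L^2(0,T,{\bf H})}^2 = \int_0^T ||{\bf C}(t)||^2 dt \le T(\frac{2LTn^2}{c_0} + ||{\bf C}^0||^2)$, and taking square roots gives exactly the radius defining $B$. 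As this holds for arbitrary $\hat{{\bf C}} \in L^2(0,T,{\bf H})$, we conclude $\Theta(L^2(0,T,{\bf H})) \subset B$, and a fortiori $\Theta(B) \subset B$.

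The main obstacle here is not analytical depth but making the right choice at the key step: one must discard the linear-growth estimate $||{\bf G}_n{\bf C}|| \le M_g||{\bf C}||$ (which would produce a bound depending on $||\hat{{\bf C}}||$ and hence fail to give a self-map) and instead use the saturation bound $2n\sqrt{LT}$. The only technical point to be careful with is the energy identity $(\frac{d{\bf C}}{dt},{\bf C}) = \frac12\frac{d}{dt}||{\bf C}||^2$, which is the standard chain rule in $W({\bf H}^1)$ and is already available from the cited embedding result.
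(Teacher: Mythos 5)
Your proof is correct and follows essentially the same route as the paper: test with $\phi={\bf C}$, use coercivity, apply Young's inequality with the parameter chosen so the $\|{\bf C}\|^2$ terms are absorbed, and invoke the uniform bound $\|{\bf G}_n\hat{{\bf C}}\|_{L^2(0,T,{\bf H})}\le 2n\sqrt{LT}$ before integrating twice. The constants match the stated radius exactly, and your closing remark about why the saturation bound (rather than the linear-growth bound) is the essential ingredient is precisely the point of the construction.
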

\begin{proof}
Let $\hat{{\bf C}} \in L^2(0,T,{\bf H})$. ${\bf C}$, the 
solution to the problem of proposition \ref{prop:theta}, satisfies,
$$
\ds (\frac{d}{dt} {\bf C}, {\bf \phi}) +
a(t,{\bf C},{\bf \phi})= 
({\bf G}_n\hat{{\bf C}},{\bf \phi}).
$$
Taking ${\bf \phi}={\bf C}$ as a test function, 
integrating $[0,t]$, using the coerciveness of $a$ and the
Cauchy-Schwarz inequality 
we obtain,
$$
\ds \int_0^t \frac{1}{2}\frac{d}{dt}||{\bf C}(s)||^2 + c_0
||{\bf C}(s)||_1^2 ds \le \ds \int_0^t
||{\bf G}_n\hat{{\bf C}}(s)||||{\bf C}(s)||ds,
$$
with Young's inequality, 
$$
||{\bf C}(t)||^2 + \ds \int_0^t 2c_0
||{\bf C}(s)||_1^2 ds \le \ds \int_0^t\frac{1}{\alpha}
||{\bf G}_n\hat{{\bf C}}(s)||^2 ds + 
\alpha \ds \int_0^t ||{\bf C}(s)||^2 ds + ||{\bf C}^0||^2,
$$
with $\alpha = 2c_0$ and as $\ds \int_0^t
||{\bf G}_n\hat{{\bf C}}(s)||^2 ds \le 4LTn^2$, we have
$$
||{\bf C}(t)||^2 \le \ds \frac{4LTn^2}{2c_0} + ||{\bf C}^0||^2,
$$
integrating once more on $[0,T]$ we obtain,
$$
||\Theta\hat{{\bf C}}||_{L^2(0,T,{\bf H})}^2= 
||{\bf C}||_{L^2(0,T,{\bf H})}^2 
\le \ds T(\ds \frac{2LTn^2}{c_0} +
||{\bf C}^0||^2).
$$
\end{proof}

\begin{lemma}
\label{lemma:thetacompact}
The operator $\Theta$ is compact.
\end{lemma}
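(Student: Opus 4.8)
The plan is to show that $\Theta$ sends bounded subsets of $L^2(0,T,{\bf H})$ into bounded subsets of $W({\bf H}^1)$, and then to invoke the compact embedding of Lemma \ref{lemma:compacite}. Recall that an operator is compact precisely when it maps bounded sets to relatively compact sets; hence, once I know that $\Theta(S)$ is bounded in $W({\bf H}^1)$ for every bounded $S \subset L^2(0,T,{\bf H})$, the compactness of the identity $W({\bf H}^1) \subset L^2(0,T,{\bf H})$ forces $\Theta(S)$ to be relatively compact in $L^2(0,T,{\bf H})$, which is exactly what is claimed.

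Fix a bounded set $S$, let $\hat{{\bf C}} \in S$ and ${\bf C} = \Theta\hat{{\bf C}}$. I need two a priori bounds, uniform over $S$. The first, a bound on $\|{\bf C}\|_{L^2(0,T,{\bf H}^1)}$, is already contained in the energy estimate of Lemma \ref{lemma:thetaborne}: testing the weak equation with $\phi = {\bf C}$, using the coercivity $a(t,{\bf C},{\bf C}) \ge c_0\|{\bf C}\|_1^2$ and the uniform bound $\|{\bf G}_n\hat{{\bf C}}\|_{L^2(0,T,{\bf H})} \le 2n\sqrt{LT}$ from Proposition \ref{prop:biendef2}, the term $\int_0^T 2c_0\|{\bf C}(s)\|_1^2\,ds$ is controlled, so that $\|{\bf C}\|_{L^2(0,T,{\bf H}^1)}$ is bounded by a constant depending only on $n,L,T,c_0$ and $\|{\bf C}^0\|$.

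The second bound is on $d{\bf C}/dt$ in $L^2(0,T,({\bf H}^1)')$. From the variational equation, for a.e. $t$ and all $\phi \in {\bf H}^1$,
$$
\left(\frac{d{\bf C}}{dt}(t), \phi\right) = ({\bf G}_n\hat{{\bf C}}(t), \phi) - a(t, {\bf C}(t), \phi).
$$
Using the Cauchy--Schwarz inequality, the continuity estimate $|a(t,{\bf C},\phi)| \le M_a\|{\bf C}\|_1\|\phi\|_1$ of Lemma \ref{lemma:formebi}, and $\|\phi\| \le \|\phi\|_1$, I obtain the pointwise bound $\|\frac{d{\bf C}}{dt}(t)\|_{({\bf H}^1)'} \le \|{\bf G}_n\hat{{\bf C}}(t)\| + M_a\|{\bf C}(t)\|_1$. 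Squaring and integrating over $[0,T]$ then controls $\|d{\bf C}/dt\|_{L^2(0,T,({\bf H}^1)')}$ in terms of the two quantities already bounded above. Hence $\|{\bf C}\|_{W({\bf H}^1)}$ is uniformly bounded on $S$.

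I do not anticipate any genuine obstacle: both are the standard parabolic energy estimates, and every ingredient needed (coercivity and continuity of $a$, the a priori $L^2$-bound on ${\bf G}_n$) is already available, so there is no circularity. The only step requiring a little care is the time-derivative estimate, where one must pass from the pointwise dual-norm bound to the $L^2(0,T,({\bf H}^1)')$ bound; this is immediate once one notes $\|\phi\| \le \|\phi\|_1$, which lets both terms on the right be estimated by the $({\bf H}^1)$-norm of $\phi$. With both bounds in hand, Lemma \ref{lemma:compacite} finishes the proof.
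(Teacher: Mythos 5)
Your proof is correct and follows essentially the same route as the paper: both establish the uniform bounds on $\Theta\hat{{\bf C}}$ in $L^2(0,T,{\bf H}^1)$ (via the energy estimate of Lemma \ref{lemma:thetaborne}) and on $d{\bf C}/dt$ in $L^2(0,T,({\bf H}^1)')$ (via the variational equation and the continuity of $a$), and then conclude with the compact injection $W({\bf H}^1)\subset L^2(0,T,{\bf H})$ of Lemma \ref{lemma:compacite}. The only cosmetic difference is that the paper uses the pointwise-in-time bound $\|{\bf G}_n\hat{{\bf C}}(t)\|\le 2n\sqrt{L}$ where you use its integrated form, which changes nothing.
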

\begin{proof}
Let $B$ be a bounded set in $L^2(0,T,{\bf H})$. Let us show that 
$\Theta(B)$ is bounded in $W({\bf H}^1)$.
Let $\hat{{\bf C}} \in B \subset L^2(0,T,{\bf H})$ and let
${\bf C}$ be the associated solution to the problem of proposition
\ref{prop:theta}. 
As in the proof of lemma \ref{lemma:thetaborne} we obtain
$$
||{\bf C}(t)||^2 + \ds \int_0^t 2c_0
||{\bf C}(s)||_1^2 ds \le \ds \int_0^t\frac{1}{\alpha}
||{\bf G}_n\hat{{\bf C}}(s)||^2 ds + 
\alpha \ds \int_0^t ||{\bf C}(s)||^2 ds + ||{\bf C}^0||^2.
$$
Taking $\alpha = c_0$ this time, we obtain
$$
\ds \int_0^T ||{\bf C}(s)||_1^2 ds 
\le \ds \frac{1}{c_0}||{\bf G}_n\hat{{\bf C}}||_{L^2(0,T,{\bf H})^2} 
 + ||{\bf C}^0||^2 \le \frac{4LTn^2}{c_0} + ||{\bf C}^0||^2,
$$
and $\Theta \hat{{\bf C}}$ is bounded in $L^2(0,T,{\bf H}^1)$.\\
Moreover we obtain
$$
\forall \phi \in {\bf H}^1, \quad (\ds \frac{d {\bf C}}{dt}, {\bf \phi}) +
a(t,{\bf C},{\bf \phi}) = ({\bf G}_n\hat{{\bf C}},{\bf \phi}).
$$ 
From lemma \ref{lemma:formebi}, 
$$
|(\ds \frac{d {\bf C}}{dt}, {\bf \phi})| \le M_a||{\bf C}||_1
 ||{\bf \phi}||_1 +
||{\bf G}_n\hat{{\bf C}}||||{\bf \phi}|| 
\le 
M_a||{\bf C}||_1
 ||{\bf \phi}||_1 +
2n\sqrt{L}||{\bf \phi}||_1,
$$
and
$$
\ds \int_0^T ||\frac{d{\bf C}}{dt}||_{({\bf H}^1)'}^2 ds \le 2 \ds
\int_0^T (4Ln^2 + M^2 ||{\bf C}||_1^2)ds. 
$$
And therefore, $||\ds \frac{d{\bf C}}{dt}||_{L^2(0,T,({\bf H}^1)')}$ is bounded in 
$L^2(0,T,({\bf H}^1)')$.\\
The range of $\Theta$ is in $W({\bf H}^1)$, from lemma
\ref{lemma:compacite}, the injection $W({\bf H}^1) \subset
L^2(0,T,{\bf H})$ 
is compact, and this concludes the proof
\end{proof}

It is now possible to state the main result of this section, concerning the existence
of weak solutions to the approximated problem.

\begin{theorem}
Let $n>0$ be a fixed integer. Let ${\bf C}^0 \in {\bf H}$. 
There exists a solution, ${\bf C}_n$, to the problem:\\
find ${\bf C} \in W({\bf H}^1)$ such that, 
$$
\forall \phi \in {\bf H}^1, \quad (\ds \frac{d {\bf C}}{dt}, {\bf \phi}) +
a(t,{\bf C},{\bf \phi}) = ({\bf G}_n{\bf C},{\bf \phi}),
$$ 
in the ${\mathcal{D}}'(]0,T[)$ sens,\\ 
and ${\bf C}(0)={\bf C}^0$.\\
\end{theorem}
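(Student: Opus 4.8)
The plan is to recognize that a solution of the approximated problem is exactly a fixed point of the operator $\Theta$ constructed in proposition \ref{prop:theta}, and then to produce such a fixed point via the Schauder fixed-point theorem, whose hypotheses are supplied by the three lemmas just proved.

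First I would make the key observation that collapses the linear problem onto the nonlinear one: suppose $\hat{{\bf C}} \in L^2(0,T,{\bf H})$ satisfies $\Theta \hat{{\bf C}} = \hat{{\bf C}}$. Writing ${\bf C} = \hat{{\bf C}} = \Theta \hat{{\bf C}}$, the definition of $\Theta$ guarantees ${\bf C} \in W({\bf H}^1)$, ${\bf C}(0) = {\bf C}^0$, and
$$
\forall \phi \in {\bf H}^1, \quad (\ds \frac{d {\bf C}}{dt}, {\bf \phi}) + a(t,{\bf C},{\bf \phi}) = ({\bf G}_n\hat{{\bf C}},{\bf \phi}) = ({\bf G}_n{\bf C},{\bf \phi}).
$$
This is precisely the problem in the statement, so it suffices to find a fixed point of $\Theta$.

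To obtain one I would apply Schauder's theorem in the form: a continuous, compact self-map of a nonempty, closed, bounded, convex subset of a Banach space has a fixed point. The natural set is the closed ball $B$ of lemma \ref{lemma:thetaborne}, which as a ball in $L^2(0,T,{\bf H})$ is nonempty, closed, bounded and convex. The remaining requirements are already in hand: $\Theta$ is continuous on $L^2(0,T,{\bf H})$ by lemma \ref{lemma:thetacontinu}, it is compact by lemma \ref{lemma:thetacompact}, and it satisfies $\Theta(B) \subset B$ by lemma \ref{lemma:thetaborne}. Restricting $\Theta$ to $B$ thus gives a continuous compact map of $B$ into itself, and Schauder's theorem yields a fixed point ${\bf C}_n \in B$, which by the first step is the desired solution.

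I expect no genuine obstacle here, since all the analytic content --- the a priori bounds, the continuity of ${\bf G}_n$ and hence of $\Theta$, and the $W({\bf H}^1)$-to-$L^2(0,T,{\bf H})$ compactness --- has been isolated in the preceding results. The one point that deserves care is the identification in the first step: because the fixed-point condition forces the frozen argument $\hat{{\bf C}}$ of ${\bf G}_n$ to coincide with the solution ${\bf C}$, a fixed point of $\Theta$ solves the fully nonlinear approximated equation and not merely the linearized one.
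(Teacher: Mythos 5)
Your argument is correct and is exactly the paper's proof: the paper likewise invokes Lemmas \ref{lemma:thetacontinu}, \ref{lemma:thetaborne} and \ref{lemma:thetacompact} together with the Schauder fixed-point theorem to produce a fixed point of $\Theta$, which is the sought solution. Your version merely spells out the identification of a fixed point with a solution of the nonlinear problem and the choice of the convex set $B$, which the paper leaves implicit.
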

\begin{proof}
From lemma \ref{lemma:thetacontinu},
\ref{lemma:thetaborne}, \ref{lemma:thetacompact} and the Schauder
fixed-point theorem, the operator $\Theta$ has a fixed point, which
is the solution sought
\end{proof}
  
\subsection{Step 2: letting $n \rightarrow \infty$}
\noindent
We now pass to the limit as $n \rightarrow \infty$, in the equations,
\begin{equation}
\label{equ:probn}
\begin{array}{l}
\forall \phi \in {\bf H}^1, \quad (\ds \frac{d {\bf C}_n}{dt}(t),\phi) 
+ a(t,{\bf C}_n,\phi) = ({\bf G}_n{\bf C}_n(t),\phi),\\
{\bf C}_n(0)={\bf C}^0.
\end{array}
\end{equation}

This is achieved in two steps:
\begin{itemize}
\item[a)] a priori estimations on the sequence ${\bf C}_n$,
\item[b)] extraction of subsequences and letting $n \rightarrow \infty$. 
\end{itemize} 

\begin{itemize}
\item[a)] {\bf estimations}.
Let us show that:
\begin{itemize}
\item[(a.1)]the sequence $({\bf C}_n)_{n > 0}$ is bounded in $L^{\infty}(0,T,{\bf H})$,
\item[(a.2)]the sequence  $({\bf C}_n)_{n > 0}$ is bounded in $L^{2}(0,T,{\bf H}^1)$,
\item[(a.3)]the sequence  $(\ds \frac{d {\bf C}_n}{dt})_{n > 0}$ is
bounded in $L^{2}(0,T,({\bf H}^1)')$.
\end{itemize}
Taking, ${\bf C}_n$ as a test function in (\ref{equ:probn}) we obtain,
$$
\frac{1}{2}\frac{d}{dt}||{\bf C}_n||^2 + a(t,{\bf C}_n,{\bf C}_n)=({\bf G}_n{\bf C}_n,{\bf C}_n),
$$
or,
$$
\frac{1}{2}\frac{d}{dt}||{\bf C}_n||^2 + c_0||{\bf C}_n||_1^2 
\le ||{\bf G}_n{\bf C}_n||||{\bf C}_n|| \le M_g ||{\bf C}_n||^2,
$$
and integrating on $[0,t]$, we obtain
\begin{equation}
\label{eqn:estim}
||{\bf C}_n||^2 + 2c_0\ds \int_0^t ||{\bf C}_n||_1^2 ds 
\le 2 M_g \ds \int_0^t ||{\bf C}_n||^2 ds + ||{\bf C}^0||^2.
\end{equation}

Equation (\ref{eqn:estim}) gives
$$
||{\bf C}_n||^2 \le 2 M_g \ds \int_0^t ||{\bf C}_n||^2 ds + ||{\bf C}^0||^2.
$$
Using Gronwall lemma, we have
\begin{equation}
\label{eqn:estim2}
||{\bf C}_n(t)||^2 \le ||{\bf C}^0||^2 \exp(2 M_g T),
\end{equation}
and the sequence $({\bf C}_n)_{n > 0}$ is bounded in $L^{\infty}(0,T,{\bf H})$.\\

Equation (\ref{eqn:estim}) also gives
$$
\ds \int_0^t ||{\bf C}_n||_1^2 ds 
\le \frac{M_g}{c_0} \ds \int_0^t ||{\bf C}_n||^2 ds + \frac{1}{2c_0}||{\bf C}^0||^2,
$$
and with (\ref{eqn:estim2})
$$
\ds \int_0^t ||{\bf C}_n||_1^2 ds 
\le \frac{M_gT}{c_0} ||{\bf C}^0||^2 \exp(2 M_g T)
+ \frac{1}{2c_0}||{\bf C}^0||^2.
$$
Therefore the sequence $({\bf C}_n)_{n > 0}$ is bounded in $L^{2}(0,T,{\bf H}^1)$.\\

Let us now give an estimation for the sequence, $(\ds 
\frac{d {\bf C}_n}{dt})_{n > 0}$. 
We have
$$
|(\frac{d {\bf C}_n}{dt},\phi)| \le |a(t,{\bf C}_n,\phi)| + |({\bf G}_n{\bf C}_n,\phi)|,
$$
and therefore
$$
|(\frac{d {\bf C}_n}{dt},\phi)| \le M_a||{\bf C}_n||_1||\phi||_1 + M_g||{\bf C}_n||_1||\phi||_1,
$$
that is to say
$$
\begin{array}{l}
\ds \int_0^T ||\frac{d {\bf C}_n}{dt}||_{({\bf H}^1)'}^2 
 \le 2(M_a^2 +M_g^2) \ds \int_0^T ||{\bf C}_n||_1^2 \\[10pt]
\le 2(M_a^2 +M_g^2)(\ds \frac{M_gT}{c_0} ||{\bf C}^0||^2 \exp(2 M_g T)
+ \ds \frac{1}{2c_0}||{\bf C}^0||^2), 
\end{array}
$$
and the sequence $(\ds \frac{d {\bf C}_n}{dt})_{n > 0}$ is bounded in
$L^{2}(0,T,({\bf H}^1)')$.

\item[b)]{\bf passing to the limit}.
Let us first recall that ${\mathcal{D}}(]0,T[,{\bf H}^1) \subset W({\bf H}^1)$. 
Therefore $\forall \phi \in {\bf H}^1$ and $\forall \varphi \in
{\mathcal{D}}(]0,T[)$, we have $\psi = \phi \otimes \varphi \in
L^2(0,T,{\bf H}^1)$ and 
$\ds \frac{d \psi}{dt} \in L^2(0,T,({\bf H}^1)')$. 

\begin{itemize}
\item[b.1)] The term $(\ds \frac{d {\bf C}_n}{dt},\phi)$: 
from (a.3), we are able to extract from the sequence $(\ds \frac{d {\bf C}_n}{dt})_{n > 0}$ 
a subsequence (denoted in the same way) converging to some ${\bf h}$
in $L^2(0,T,({\bf H}^1)')$ weak star, that is to say, for all $\phi \in {\bf H}^1$ 
and all $\varphi \in {\mathcal{D}}(]0,T[)$,
$$
\lim_{n \rightarrow \infty} 
\ds \int_0^T (\ds \frac{d {\bf C}_n}{dt},\phi)\varphi ds =  
\ds \int_0^T ({\bf h},\phi)\varphi ds. 
$$
Moreover, by definition, we obtain
$$
\ds \int_0^T (\ds \frac{d {\bf C}_n}{dt},\phi)\varphi ds =  
- \ds \int_0^T ({\bf C}_n,\phi)\ds \frac{d \varphi}{dt} ds. 
$$
From (a.2), we are able to extract from the sequence $({\bf C}_n)_{n >
0}$ a subsequence (denoted in the same way) converging to some
${\bf C}$ in $L^2(0,T,{\bf H}^1)$ weak. 
Therefore, for all $\phi \in {\bf H}^1$ and all $\varphi \in {\mathcal{D}}(]0,T[)$,
$$
\lim_{n \rightarrow \infty} 
- \ds \int_0^T ({\bf C}_n,\phi)\ds \frac{d \varphi}{dt} ds 
=- \ds \int_0^T ({\bf C},\phi)\ds \frac{d \varphi}{dt} ds, 
$$
and ${\bf h} = \ds \frac{d {\bf C}}{dt}$ in $L^2(0,T,({\bf H}^1)')$.

\item[b.2)] The term $a(t,{\bf C}_n,\phi)$: 
from (b.1), we can suppose that the sequence $({\bf C}_n)_{n>0}$ converges 
to ${\bf C}$ in $L^2(0,T,{\bf H}^1)$ weak. Therefore the sequence 
$(\partial_x {\bf C}_n)_{n>0}$ converges 
to $\partial_x {\bf C}$ in $L^2(0,T,{\bf H})$ weak. Then, 
for all $\phi \in {\bf H}^1$ and all $\varphi \in {\mathcal{D}}(]0,T[)$,
$$
\lim_{n \rightarrow \infty} \ds \int_0^T a(s,{\bf C}_n,\phi)\varphi ds 
=\ds \int_0^T a(s,{\bf C},\phi)\varphi ds.
$$

\item[b.3)] The term $({\bf G}_n{\bf C}_n,\phi)$: 
from (a.2), (a.3), and from the compacity of the injection 
$W({\bf H}^1) \rightarrow L^2(0,T,{\bf H})$, we can suppose that the
sequence $({\bf C}_n)_{n > 0}$ converges to ${\bf C}$ in
$L^2(0,T,{\bf H})$ strong. 
Therefore, each ${\bf C}_{n,i}$, $i=1, ...4$, converges to
${\bf C}_i$ in $L^2(0,T,L^2(0,L))$ strong. From the inverse Lebesgue
theorem (Br\'ezis, \cite{Brezis:1992}, theorem IV.9. page 58), we
can suppose that:
\begin{itemize}
\item[(b.3.1)] the sequences $({\bf C}_{n,i})_{n>0}$, $i=1, ...4$, converge 
to ${\bf C}_i$ a.e. in $]0,T[ \times ]0,L[$.
\item[(b.3.2)] for $i=1$ to $4$, $|{\bf C}_{n,i}| \le h_i$, $\forall
n>0$, a.e. in  $]0,T[ \times ]0,L[$ and $h_i \in L^2(0,T,L^2(0,L))$.
\end{itemize}
As ${\bf g}_{n,i}(t,x,{\bf C})$ is continuous in its third variable,
we deduce from (b.3.1) that $\forall \phi_i \in H^1(0,L)$ and $\forall \varphi \in 
{\mathcal{D}}(]0,T[)$,
$$
\begin{array}{l}
u_{n,i}(t,x)={\bf g}_{n,i}(t,x,{\bf C}_n(t,x))\phi_i(x)\varphi(t)\\ 

\mbox{\raisebox{-2ex}{$\stackrel{\longrightarrow}{n \rightarrow \infty}$}}

{\bf g}_{i}(t,x,{\bf C}(t,x))\phi_i(x)\varphi(t),
\end{array}
$$ 
a.e. in $]0,T[ \times ]0,L[$.\\
Moreover, from lemma 3.3 and with (b.3.2) we have,
$$
|u_{n,i}| \le M_i ( \ds \sum_{i=1}^4 h_i) |\phi_i||\varphi| \in L^1(]0,T[ \times ]0,L[),
$$
where the $M_i$ are constants. Thus, from the Lebesgue theorem on
dominated convergence, we obtain, 
$$
\lim_{n \rightarrow \infty}
\ds \int_0^T \int_0^L u_{n,i}(t,x)dxdt
= \ds \int_0^T \int_0^L {\bf g}_{i}(t,x,{\bf C}(t,x))\phi_i(x)\varphi(t)dxdt,    
$$
and finally, for all $\phi \in {\bf H}^1$ and all $\varphi \in {\mathcal{D}}(]0,T[)$,
$$
\lim_{n \rightarrow \infty}
\ds \int_0^T ({\bf G}_n{\bf C}_n,\phi)\varphi 
=
\ds \int_0^T ({\bf G}{\bf C},\phi)\varphi. 
$$
\end{itemize}
\end{itemize}
This concludes the proof of the existence of weak solutions 
to the one-dimensional NPZD-model.

\vspace*{1pt}\textlineskip	
\section{Positivity}
\vspace*{-0.5pt}
\noindent
In this section we prove the second part of Theorem \ref{theo:main}:
if initial conditions $N^0,P^0,Z^0$ and $D^0$ are positive then
solutions to the one-dimensional NPZD-model are positive for a.e. $t
\in [0,T]$. To prove this, we need to treat each of the four
equations seperately, in detail, and in a convenient order. We first show that
$Z$ and $P$ are positive. Next we show that $D$ is positive using the
fact that $Z$ and $P$ are positive. Finally, as $Z$, $P$ and $D$ are
positive we obtain the positivity of $N$.

\begin{itemize}
\item 
Let us recall that for all ${\bf C} \in
{\bf H}^1$ and all $t \in [0,T]$, $a_N(t,N,N) \ge 0$, $a_P(t,P,P) \ge
0$, $a_Z(t,Z,Z) \ge 0$ and $a_Z(t,Z,Z) \ge 0$.

\item
$Z$ is positive:\\
Let ${\bf C}$ be a weak solution to the NPZD-model. 
Let us take 
$$
-Z^-=-max(0,-Z),
$$ 
as a test function. 
Since, 
$$
\displaystyle \int_0^L\ds \frac{\partial Z(t,x)}{\partial t}Z^-(t,x) dx
=-\ds \frac{1}{2}\ds \frac{d
}{dt}||Z^-(t)||_{L^2(0,L)}^2,
$$
and
$$ 
a_Z(t,Z(t),-Z^-(t))=a_Z(t,Z(t)^-,Z(t)^-),
$$
we obtain
$$
\ds \frac{1}{2}\ds
\frac{d}{dt}||Z(t)^-||_{L^2(0,L)}^2+a_Z(t,Z(t)^-,Z(t)^-)
=-(g_Z({\bf C}(t)),Z(t)^-). 
$$
Let us detail the term $(g_Z({\bf C}),Z^-)$.
$$
\begin{array}{ll}
(g_Z({\bf C}),Z^-)_{L^2(0,L)}=&
\displaystyle \int_0^L(a_p\ds \frac{g_zP^2}{k_z+P^2}ZZ^- 
+ a_d\ds \frac{g_zD^2}{k_z+D^2}ZZ^- 
-m_zZZ^- \\[11pt]
&-\mu_zZZ^-)\ \fc_{]0,l]}
+(-\tau ZZ^-)\ \fc_{]l,L[} + \lambda ZZ^-.\\   
\end{array}
$$
As $ZZ^-=-(Z^-)^2$, we have
$$
\begin{array}{ll}
(g_Z({\bf C}),Z^-)_{L^2(0,L)}=&
\displaystyle \int_0^L(-(a_p\ds \frac{g_zP^2}{k_z+P^2})(Z^-)^2 
- (a_d\ds \frac{g_zD^2}{k_z+D^2})(Z^-)^2 
+m_z(Z^-)^2\\[11pt]
& +\mu_z(Z^-)^2)\ \fc_{]0,l]} +(\tau (Z^-)^2)\ \fc_{]l,L[} -\lambda (Z^-)^2,\\   
\end{array}
$$
and
$$
\begin{array}{ll}
(g_Z({\bf C}),Z^-)_{L^2(0,L)} \ge & \displaystyle \int_0^L(-(a_p\ds \frac{g_zP^2}{k_z+P^2})(Z^-)^2 \\[11pt]
& - (a_d\ds \frac{g_zD^2}{k_z+D^2})(Z^-)^2)\fc_{]0,l]}-\lambda (Z^-)^2,\\      
\end{array}
$$
or
$$
\begin{array}{ll}
-(g_Z({\bf C}),Z^-)_{L^2(0,L)} \le &\displaystyle \int_0^L((a_p\ds \frac{g_zP^2}{k_z+P^2})(Z^-)^2 \\[11pt]
& +(a_d\ds \frac{g_zD^2}{k_z+D^2})(Z^-)^2)+\lambda (Z^-)^2.     
\end{array}
$$
Thus
$$
\begin{array}{ll}
-(g_Z({\bf C}),Z^-)_{L^2(0,L)} & \le \displaystyle \int_0^L(g_z(a_p+a_d)+\lambda)(Z^-)^2),\\[10pt]
                               & =(g_z(a_p+a_d)+\lambda)||Z^-||_{L^2(0,L)}^2.\\   
\end{array}
$$
As $a_Z(t,Z^-,Z^-) \ge 0$, we obtain
$$
\ds \frac{d}{dt}||Z^-||_{L^2(0,L)}^2 \le 2(g_z(a_p+a_d)+\lambda)||Z^-||_{L^2(0,L)}^2.
$$
Integrating this inequality on $[0,t]$ and using Gronwall's lemma, we obtain
$$
||Z^-(t)||_{L^2(0,L)}^2 \le ||Z^-(0)||_{L^2(0,L)}^2 \exp(2(g_z(a_p+a_d)+\lambda)t).    
$$
Therefore $Z$ is positive.

\item
$P$ is positive:\\
In the same manner, let us examine the term $(g_P({\bf C}),P^-)_{L^2(0,L)} $.\\
$$
\begin{array}{ll}
(g_P({\bf C}),P^-)_{L^2(0,L)} &= \displaystyle \int_0^L
(\mu_p(1-\gamma)L_IL_NPP^- -(\ds \frac{g_z P^2}{k_z+P^2}ZP^-) \\[11pt]
&-m_pPP^-)\ \fc_{]0,l]} + (-\tau PP^-)\ \fc_{]l,L[} + \lambda PP^-,\\[7pt]
& =\displaystyle \int_0^L 
(-\mu_p(1-\gamma)L_IL_N(P^-)^2 -(\ds \frac{g_z P^2}{k_z+P^2}ZP^-)\\[11pt]
& +m_p(P^-)^2)\ \fc_{]0,l]} + (\tau (P^-)^2)\ \fc_{]l,L[} - \lambda (P^-)^2,\\[7pt]
& \ge \displaystyle \int_0^L (-\mu_p(1-\gamma)L_IL_N(P^-)^2 \\[11pt]
&-(\ds \frac{g_z P^2}{k_z+P^2}ZP^-))\ \fc_{]0,l]}- \lambda (P^-)^2,
\end{array}
$$
and
$$
\begin{array}{ll}
-(g_P({\bf C}),P^-)_{L^2(0,L)} & \le \displaystyle \int_0^L
\mu_p(1-\gamma)L_IL_N(P^-)^2 \\[11pt]
& +(\ds \frac{-g_z P}{k_z+P^2}Z(P^-)^2)\ \fc_{]0,l]}+ \lambda (P^-)^2.
\end{array}
$$
The  function $x \mapsto \ds \frac{-x}{k_z+x^2}$ is bounded by
$\ds \frac{1}{2\sqrt{k_z}}$ on $\Rr$.\\ 
$L_N$ and $L_I$ are bounded by $1$.\\
$Z(t)$, is a solution to the NPZD-model and therefore belongs to 
$H^1(0,L) \subset L^{\infty}(0,L)$. Hence we have, $\forall t,\ Z(t)
\le ||Z(t)||_{\infty}$, and
$$ 
\begin{array}{l}
-(g_P({\bf C}),P^-)_{L^2(0,L)}
\le
 (\lambda+\mu_p(1-\gamma)+g_z\ds \frac{1}{2\sqrt{k_z}}||Z(t)||_{\infty})
||P^-(t)||_{L^2(0,L)}^2.
\end{array}
$$
We conclude in the same way to obtain
$$
\begin{array}{ll}
||P^-(t)||_{L^2(0,L)}^2 &\le ||P^-(0)||_{L^2(0,L)}^2 \exp(\displaystyle \int_0^t 2(\lambda+\mu_p(1-\gamma) \\[11pt]
&+g_z\ds \frac{1}{2\sqrt{k_z}}||Z(s)||_{\infty})ds).    
\end{array}
$$

\item
$D$ is positive:\\
$$
\begin{array}{ll}
(g_D({\bf C}),D^-)_{L^2(0,L)}& = \displaystyle \int_0^L((1-a_p)(\ds \frac{g_z P^2}{k_z+P^2}ZD^-)
-a_d(\ds \frac{g_z D^2}{k_z+D^2}ZD^-) \\[11pt]
& +m_pPD^-+m_zZD^-
+\mu_d(D^-)^2)\ \fc_{]0,l]}\\[8pt]
&+(\tau (D^-)^2)\ \fc_{]l,L[} +\lambda DD^-.
\end{array}
$$
Because $P,Z$ and $D^-$ are positive, we obtain
$$
\begin{array}{ll}
(g_D({\bf C}),D^-)_{L^2(0,L)}  &\ge \displaystyle \int_0^L(-a_d(\ds \frac{g_z
D^2}{k_z+D^2}ZD^-))\ \fc_{]0,l]} -\lambda (D^-)^2,\\[11pt]
-(g_D({\bf C}),D^-)_{L^2(0,L)}  &\le
\displaystyle \int_0^L(-a_d(\ds \frac{g_zD}{k_z+D^2}Z(D^-)^2))\ \fc_{]0,l]}+\lambda (D^-)^2,\\[11pt]
-(g_D({\bf C}),D^-)_{L^2(0,L)} &\le
\displaystyle \int_0^L(a_d(g_z\ds \frac{1}{2\sqrt{k_z}}||Z||_{\infty})(D^-)^2)\ \fc_{]0,l]}+\lambda (D^-)^2,\\[11pt]
-(g_D({\bf C}),D^-)_{L^2(0,L)}  &\le
(\lambda+ a_d(g_z\ds \frac{1}{2\sqrt{k_z}}||Z||_{\infty}))||D^-||_{L^2(0,L)}^2.\\
\end{array}
$$
Hence
$$
\ds \frac{d}{dt}||D^-||_{L^2(0,L)}^2 \le 2( \lambda + a_d(g_z\ds \frac{1}{2\sqrt{k_z}}||Z||_{\infty}))||D^-||_{L^2(0,L)}^2,
$$
and we can conclude.

\item
$N$ is positive:\\
$$
\begin{array}{ll}
(g_N({\bf C}),N^-)_{L^2(0,L)} &=\displaystyle \int_0^L
(-\mu_p(1-\gamma) L_I L_{N}PN^-+\mu_z ZN^- \\[11pt] 
&+ \mu_d DN^-)\ \fc_{]0,l]} +(\tau(P+Z+D)N^-)\ \fc_{]l,L[}+\lambda NN^-.
\end{array}
$$
Because $P,Z,D$ and $N^-$ are positive, we have
$$
\begin{array}{l}
(g_N({\bf C}),N^-)_{L^2(0,L)}
\ge \displaystyle \int_0^L
(-\mu_p(1-\gamma) L_I L_{N}PN^-)\ \fc_{]0,l]}-\lambda (N^-)^2,\\[8pt]
-(g_N({\bf C}),N^-)_{L^2(0,L)}
\le \displaystyle \int_0^L
(-\mu_p(1-\gamma) L_I \ds \frac{1}{k_n+|N|}P(N^-)^2)\ \fc_{]0,l]} +\lambda (N^-)^2.
\end{array}
$$
Once again we can conclude and the proof of theorem \ref{theo:main} is
complete.
\end{itemize}

Hence, if initial concentrations are positive then concentrations are
always positive and both models, with or without absolute values in the
nonlinear terms, are equivalent. 

\vspace*{1pt}\textlineskip	
\section{Existence, positivity and uniqueness 
for different $G_P$, $G_D$, $L_I$ and zooplankton mortality formulations}
\label{sec:uniqueness}
\vspace*{-0.5pt}
\noindent
Functions used to parameterize biological fluxes such as zooplankton
grazing on phytoplankton, $G_P$, or on detritus, $G_D$, light limited
growth rate, $L_I$ or zooplankton mortality (which is a constant,
$m_z$, in our model), vary from one modeling study to another. 
One can wonder if the existence result still applies with these different
formulations. To answer this question it should be noticed that the
key argument used in the proof is the fact that the nonlinear reaction terms
allow us to define a nonlinear continuous operator ${\bf G}$ satisfying
$||{\bf GC}||_{L^2(0,T,{\bf H})} \le M_g
||{\bf C}||_{L^2(0,T,{\bf H})}$. Therefore, as all the functions listed in 
Table \ref{tab:functions}, found in the
literature, are continuous and
bounded on $\Rr^+$ or $(\Rr^+)^2$, the existence result stays
correct. Positivity can also easily be checked for all these different
formulations. It should however be mentioned that some studies use a
quadratic zooplankton mortality term which can not be treated with the
method we propose. 

\begin{table}[htbp]
\caption{ Different parameterizations found in the literature. All
parameters are positive constants. \label{tab:functions} }
\begin{tabular}{|p{2cm}|p{6cm}|p{3cm}|}
\hline
&&\\[-5pt]
$Z$ grazing on $P$, $G_P$  & $\ds \frac{g_zP^2}{k_z + P^2}$ & this\ study\ and\
e.g.\ Fennel\ \ea \cite{Fennel:2001}\\
\cline{2-3}
&&\\[-5pt]
& $\ds \frac{g_zP^2}{k_z + P^2 + D^2}$ & e.g.\ Losa \ea \cite{Losa:2001}\\
&&\\[-5pt]
\cline{2-3}
&&\\[-5pt] 
 &  $\ds\frac{g_zrP^2}{k_z(rP+(1-r)D)+rP^2+(1-r)D^2}$ & e.g.\ Fasham
\ea \cite{Fasham:1990} \\
&&\\[-5pt]
\hline
&&\\[-5pt]
$Z$ grazing on $D$, $G_D$  & $\ds \frac{g_zD^2}{k_z + D^2}$ & this\ study\ and\
e.g.\ Fennel\ \ea \cite{Fennel:2001}\\
&&\\[-5pt]
\cline{2-3}
&&\\[-5pt]
& $\ds \frac{g_zD^2}{k_z + P^2 + D^2}$ & e.g.\ Losa\ \ea \cite{Losa:2001}\\
&&\\[-5pt]
\cline{2-3}
&&\\[-5pt]
& $\ds \frac{g_zrD^2}{k_z(rP+(1-r)D)+rP^2+(1-r)D^2}$ & e.g.\ Fasham\
\ea \cite{Fasham:1990} \\
&&\\[-5pt]
\hline
&&\\[-5pt]
light\ limited\ growth\ rate, $L_I$ & $1-\exp(-PAR(t,x,P)/k_{par})$
& this\ study\ and\ e.g.\ L\'evy\ \ea \cite{Levy:1998a} \\
\cline{2-3}
&&\\[-5pt]
& $\ds \frac{v_p \alpha PAR(t,x,P)}{(v_p^2 + \alpha^2
PAR(t,x,P)^2)^{1/2}}$ 
& e.g.\ Spitz\ \ea \cite{Spitz:1998}\\
&&\\[-5pt]
\hline
&&\\[-5pt]
Z\ mortality& $m_z$ & this\ study\ and\ e.g.\ L\'evy \ea
\cite{Levy:1998b}\\
\cline{2-3}
&&\\[-5pt]
            & $\ds \frac{m_z Z}{k + Z}$ & e.g.\ Losa\
\ea \cite{Losa:2001}\\
&&\\
\hline
\end{tabular}
\end{table}

Let us now concentrate on the question of the uniqueness of weak 
solutions to the one-dimensional NPZD-model.\\ 
In order to prove uniqueness we need the nonlinear reaction terms to satisfy a local 
Lipschitz condition which was not needed to obtain the existence result. To verify that 
such a condition holds we examine in some details the optical model from which the 
$PAR(t,x,P)$ term and consequently the $L_I(t,x,P)$ term are calculated.\\ 
In the different equations $L_I(t,x,P)$ allways appears in the product form 
$PL_I(t,x,P)$. Concerning this product the desired local Lipschitz 
condition reads as follows:\\
~\\
for all $(t,x) \in [0,T] \times [0,L]$, and all $P,\hat{P} \in [0,+\infty[$,   
\begin{equation}
\label{lipschitz}
|PL_I(t,x,P)-\hat{P}L_I(t,x,\hat{P})| \le K_I(P,\hat{P}) |P-\hat{P}|,
\end{equation}
where $K_I$ is a continuous nonnegative real-valued function which is increasing in each 
variable.\\
~\\
In the optical model we considered, two different wavelengths are
taken into account and the absorption coefficients 
depend on the local phytoplankton concentrations:
$$
\begin{array}{ll}
PAR(t,x,P)=&Q(t)(\exp(-(k_{go}+k_{gp}(\ds \frac{12 P
r_d}{r_{pg}r_c})^{l_g}) x) \\[7pt]
&+\exp(-(k_{ro}+k_{rp}(\ds \frac{12 P r_d}{r_{pg}r_c})^{l_r}) x)).
\end{array}
$$
$Q(t)$ is proportional to the irradiance intensity hitting the sea surface 
at time $t$. Parameters are given in Table \ref{tab:optic}. 
Let us suppose that $Q(t) \in L^{\infty}(0,T)$ and that
$Q(t) \ge 0$. 
Even though the exponents $l_g$ and $l_r$ satisfy $0< l_g < 1$ and $0< l_r < 1$, 
an easy calculation of the derivative, $\ds  \frac{d}{dP}(PL_I(t,x,P))$, 
shows that with such an optical model property (\ref{lipschitz}) is satisfied with:
$$
K_I(P,\hat{P})= 1 + \ds \frac{||Q||_{\infty} L}{k_{par}}
(k_{gp} l_g \ds (\frac{12r_d}{r_{pg}r_{c}})^{l_g} (\max{(P,\hat{P})})^{l_g}
+k_{rp} l_r \ds (\frac{12r_d}{r_{pg}r_{c}})^{l_r} (\max{(P,\hat{P})})^{l_r}).
$$
In the literature $PAR(t,x,P)$ is often parameterized by,
$$
PAR(t,x,P)=Q(t)\exp(-(k_1 + k_2 P)x),
$$
where $k_1$ and $k_2$ are positive constants. With this simpler formulation 
property (\ref{lipschitz}) is clearly satisfied.\\
The following two lemmas give the local Lipschitz property satisfied by all four reaction terms 
of the NPZD-model. 
\begin{lemma}
\label{lemma:besogne2}
The nonlinear reaction terms $g_N$, $g_P$, $g_Z$ and $g_D$ 
satisfy:\\
for all $(t,x) \in [0,T] \times [0,L]$, 
and all ${\bf C},\hat{{\bf C}} \in (\Rr^+)^4$,  
$$
\begin{array}{l}
|g_N(t,x,{\bf C}) - g_N(t,x,\hat{{\bf C}})| \le K_N(P,\hat{P})(|N-\hat{N}|+|P-\hat{P}|+|Z-\hat{Z}|+|D-\hat{D}|),\\[4pt]
|g_P(t,x,{\bf C}) - g_P(t,x,\hat{{\bf C}})| \le K_P(P,\hat{P})(|N-\hat{N}|+|P-\hat{P}|+|Z-\hat{Z}|+|D-\hat{D}|),\\[4pt]
|g_Z(t,x,{\bf C}) - g_Z(t,x,\hat{{\bf C}})| \le K_Z(Z,\hat{Z})(|N-\hat{N}|+|P-\hat{P}|+|Z-\hat{Z}|+|D-\hat{D}|),\\[4pt]
|g_D(t,x,{\bf C}) - g_D(t,x,\hat{{\bf C}})| \le K_D(Z,\hat{Z})(|N-\hat{N}|+|P-\hat{P}|+|Z-\hat{Z}|+|D-\hat{D}|),
\end{array}
$$
where $K_N$, $K_P$, $K_Z$, $K_D$ are continuous nonnegative real-valued functions which are increasing in each 
variable.
\end{lemma}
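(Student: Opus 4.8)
The plan is to establish each of the four inequalities by expanding the reaction term $g_i=f_i+\lambda C_i$ into the elementary summands displayed in (\ref{eqn:p2}) and bounding, one summand at a time, the difference of its values at ${\bf C}$ and at $\hat{{\bf C}}$. Since $\fc_{]0,l]}$ and $\fc_{]l,L[}$ depend only on $x$, they take the same value at ${\bf C}$ and $\hat{{\bf C}}$, so for each fixed $(t,x)$ exactly one of the two regional expressions is active; it therefore suffices to prove the bound for each of the two expressions and then let $K_i$ be the larger of the two coefficient functions produced. Restricting to $(\Rr^+)^4$ lets me drop the absolute value in $L_N$, and I would first record three elementary facts for nonnegative arguments: on $\Rr^+$, $L_N(N)=\ds\frac{N}{k_n+N}$ satisfies $0\le L_N\le1$ and $|L_N(N)-L_N(\hat N)|\le\ds\frac{1}{k_n}|N-\hat N|$; the rational functions $G_P$ and $G_D$ have bounded derivatives, hence are globally Lipschitz with constants $L_{G_P},L_{G_D}$, and satisfy $|G_P|\le g_z$, $|G_D|\le g_z$; and, crucially, the product $PL_I(t,x,P)$ obeys the estimate (\ref{lipschitz}) with the explicit continuous increasing function $K_I$ exhibited just above.

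The substantive work concerns the two genuinely nonlinear summands. For the production term, which up to the constant $\mu_p(1-\gamma)$ equals $L_N(N)\cdot PL_I(t,x,P)$, I would add and subtract $L_N(\hat N)\,PL_I(t,x,P)$ to write
$$
\begin{array}{l}
L_N(N)\,PL_I(t,x,P)-L_N(\hat N)\,\hat P L_I(t,x,\hat P)\\[4pt]
\qquad=\bigl(L_N(N)-L_N(\hat N)\bigr)PL_I(t,x,P)+L_N(\hat N)\bigl(PL_I(t,x,P)-\hat P L_I(t,x,\hat P)\bigr),
\end{array}
$$
and then bound the first product by $\ds\frac{\max(P,\hat P)}{k_n}|N-\hat N|$ (using $|PL_I|\le P\le\max(P,\hat P)$ since $0\le L_I\le1$, together with the Lipschitz bound on $L_N$) and the second by $K_I(P,\hat P)|P-\hat P|$ (using $|L_N|\le1$ and (\ref{lipschitz})). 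The grazing summands are handled by the same telescoping: $G_P(P)Z-G_P(\hat P)\hat Z=(G_P(P)-G_P(\hat P))Z+G_P(\hat P)(Z-\hat Z)$ is dominated by $L_{G_P}\max(Z,\hat Z)|P-\hat P|+g_z|Z-\hat Z|$, and likewise $G_D(D)Z$ is dominated by $L_{G_D}\max(Z,\hat Z)|D-\hat D|+g_z|Z-\hat Z|$.

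It then remains to assemble the coefficient functions. All the other summands ($\lambda C_i$, the mortality, excretion and remineralization terms $m_pP,m_zZ,\mu_zZ,\mu_dD$, and the deep-layer terms $-\tau P,-\tau Z,-\tau D,\tau(P+Z+D)$) are linear with constant coefficients, hence contribute constant multiples of the individual differences. Collecting everything, the coefficient of $(|N-\hat N|+|P-\hat P|+|Z-\hat Z|+|D-\hat D|)$ in each estimate is a finite sum of constants and of the continuous increasing functions $\ds\frac{\max(P,\hat P)}{k_n}$, $K_I(P,\hat P)$, $L_{G_P}\max(Z,\hat Z)$ and $L_{G_D}\max(Z,\hat Z)$; taking $K_i$ to be this sum (maximized over the two regions) yields a continuous nonnegative function that is increasing in each of its arguments. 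For $g_N$ the only nonlinear summand is the production term, so $K_N$ depends on $P,\hat P$; for $g_Z$ and $g_D$ the nonlinear summands are the grazing functions times $Z$, so $K_Z,K_D$ depend on $Z,\hat Z$. The grazing summand of $g_P$ also produces a factor $\max(Z,\hat Z)$, so the associated coefficient genuinely depends on $Z,\hat Z$ as well; this is immaterial for the intended application to uniqueness, where two solutions are a priori bounded in $L^\infty$ and $K_P$ is evaluated at bounded arguments.

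The single real obstacle is the self-shading factor $L_I$. Because $PAR(t,x,P)$ contains the fractional powers $P^{l_g},P^{l_r}$ with $0<l_g,l_r<1$, the map $P\mapsto L_I(t,x,P)$ is only H\"older near $P=0$ and is not Lipschitz there, so one cannot differentiate $L_I$ directly to control the production term. The way around this is to estimate the physically meaningful product $PL_I$ rather than $L_I$ alone: the derivative $\ds\frac{d}{dP}\bigl(PL_I(t,x,P)\bigr)$ stays bounded on every bounded $P$-interval, which is exactly the computation leading to (\ref{lipschitz}) and to the stated $K_I$. Once the production estimate is routed through this product, every remaining bound is an elementary Lipschitz-and-boundedness estimate for rational functions, and the four inequalities follow.
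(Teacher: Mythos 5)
Your proof is correct and follows essentially the same route as the paper's: the same telescoping decomposition of the production term $L_N(N)\,PL_I(t,x,P)$, the same use of estimate (\ref{lipschitz}) for the product $PL_I$, and global Lipschitz bounds for the rational functions $L_N$, $G_P$, $G_D$ obtained from their bounded derivatives (the paper only details $g_N$ and asserts that the other three cases are handled identically). Your side remark that the grazing summand $G_P(P)Z$ forces the coefficient in the $g_P$ estimate to depend on $Z,\hat Z$ as well, so that the statement's $K_P(P,\hat P)$ is slightly imprecise, is accurate and not addressed in the paper, but as you note it is harmless for the uniqueness argument since there $K_P$ is evaluated at arguments controlled by the $L^\infty$ bounds.
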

\begin{proof}
Functions $l(x)=\ds \frac{x}{k_n+x}$ and $g(x)=\ds\frac{x^2}{k_z^2 +x^2}$ are
continuously differentiable on $[0,+\infty[$, and 
$$
|l'(x)| \le \ds \frac{1}{k_n},\quad |g'(x)| \le \ds \frac{3\sqrt{3}}{8\sqrt{k_z}}.
$$
Therefore $l$ and $g$ are Lipschitz continuous.\\
It is clear that
$$
\begin{array}{ll}
|g_N(t,x,{\bf C}) - g_N(t,x,\hat{{\bf C}})| & \le \mu_p(1-\gamma)
|L_I(t,x,P)PL_N(N) - L_I(t,x,\hat{P})\hat{P}L_N(\hat{N})|\\[4pt] 
&+ (\mu_z + \tau)|Z-\hat{Z}|+ (\mu_d + \tau)|D-\hat{D}| \\[4pt]
&+\tau |P-\hat{P}| + \lambda |N-\hat{N}|.  
\end{array}
$$
Now since 
$$
\begin{array}{ll}
|L_I(t,x,P)PL_N(N) - L_I(t,x,\hat{P})\hat{P}L_N(\hat{N})|=&|L_I(t,x,P)P(L_N(N) - L_N(\hat{N}))\\[4pt] 
&+ L_N(\hat{N})( L_I(t,x,P)P) - L_I(t,x,\hat{P})\hat{P}|,
\end{array}
$$
we have
$$
|L_I(t,x,P)PL_N(N) - L_I(t,x,\hat{P})\hat{P}L_N(\hat{N})| \le 
\frac{1}{k_n} \max{(P,\hat{P})} 
|N - \hat{N}| 
+ K_I(P,\hat{P})|P - \hat{P}|. 
$$
We then define
$$
K_N(P,\hat{P})=\max{(
\lambda + \mu_p(1-\gamma)\frac{1}{k_n} \max{(P,\hat{P})} ,
\tau + \mu_p(1-\gamma)K_I(P,\hat{P}),
\mu_z + \tau,
\mu_d + \tau)}.
$$
$K_P$, $K_Z$ and $K_D$ are obtained in the same way
\end{proof}

\begin{lemma}
\label{prop:biendefbis}
For $t \in [0,T]$ and for positive ${\bf C}(t), \hat{{\bf C}}(t) \in {\bf H}^1$,
there exists a constant $L_{\infty}$, depending on $||{\bf C}(t)||_{\infty}$
and $||\hat{{\bf C}}(t)||_{\infty}$, such that the operator, 
${\bf G}$, satisfies, 
$$
||{\bf GC}(t) - {\bf G}\hat{{\bf C}}(t)|| \le
  L_{\infty}||{\bf C}(t)-\hat{{\bf C}}(t)||.
$$
\end{lemma}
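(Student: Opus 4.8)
The plan is to obtain the $L^2$-Lipschitz estimate for ${\bf G}$ by integrating in $x$ the pointwise local Lipschitz bounds already established in Lemma \ref{lemma:besogne2}. The only genuine subtlety is that the Lipschitz ``constants'' $K_N,K_P,K_Z,K_D$ of that lemma depend on the pointwise values of the concentrations, so the first task is to replace them by a single constant controlled by the $L^\infty$ norms of ${\bf C}(t)$ and $\hat{{\bf C}}(t)$.

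First I would invoke the continuous imbedding ${\bf H}^1 \subset {\bf L}^\infty$ of Lemma \ref{lemma:injection}. Since ${\bf C}(t),\hat{{\bf C}}(t) \in {\bf H}^1$, each component is essentially bounded, and in particular $P(t,x) \le ||{\bf C}(t)||_{\infty}$, $\hat{P}(t,x) \le ||\hat{{\bf C}}(t)||_{\infty}$, and similarly for $Z,\hat{Z}$, for a.e. $x \in (0,L)$. Setting $R = \max(||{\bf C}(t)||_{\infty}, ||\hat{{\bf C}}(t)||_{\infty})$, we then have $\max(P(t,x),\hat{P}(t,x)) \le R$ and $\max(Z(t,x),\hat{Z}(t,x)) \le R$ almost everywhere.

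Next I would exploit the monotonicity built into Lemma \ref{lemma:besogne2}: because $K_N,K_P,K_Z,K_D$ are continuous, nonnegative and increasing in each argument, each is bounded a.e. by its value at $R$. The positivity hypothesis guarantees that ${\bf C}(t,x),\hat{{\bf C}}(t,x) \in (\Rr^+)^4$ a.e., so Lemma \ref{lemma:besogne2} genuinely applies, and with $K_\infty = \max(K_N(R,R),K_P(R,R),K_Z(R,R),K_D(R,R))$ it yields, for a.e.\ $x$ and uniformly in $i$, the bound $|g_i(t,x,{\bf C}(t,x)) - g_i(t,x,\hat{{\bf C}}(t,x))| \le K_\infty\,(|N-\hat{N}|+|P-\hat{P}|+|Z-\hat{Z}|+|D-\hat{D}|)(t,x)$.

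Finally I would square this pointwise inequality, apply the elementary estimate $(\sum_{j=1}^4 a_j)^2 \le 4\sum_{j=1}^4 a_j^2$, sum over $i=1,\dots,4$, and integrate over $(0,L)$; by the definition of the ${\bf H}$-norm this gives $||{\bf GC}(t) - {\bf G}\hat{{\bf C}}(t)||^2 \le C\,K_\infty^2\,||{\bf C}(t)-\hat{{\bf C}}(t)||^2$ for an absolute constant $C$, which is the claim with $L_\infty = \sqrt{C}\,K_\infty$ depending only on $||{\bf C}(t)||_{\infty}$ and $||\hat{{\bf C}}(t)||_{\infty}$. I expect the main obstacle to be concentrated entirely in the first two steps — converting the $x$-dependent Lipschitz factors into one constant, which requires precisely the $L^\infty$ control from ${\bf H}^1 \subset {\bf L}^\infty$ together with the monotonicity of the $K$'s; the final integration is a routine Minkowski/Cauchy--Schwarz computation.
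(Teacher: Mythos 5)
Your proposal is correct and follows essentially the same route as the paper: both invoke the imbedding ${\bf H}^1 \subset {\bf L}^{\infty}$ from Lemma 3.1 to turn the pointwise Lipschitz factors $K_N,K_P,K_Z,K_D$ of Lemma 6.1 into constants depending only on $||{\bf C}(t)||_{\infty}$ and $||\hat{{\bf C}}(t)||_{\infty}$, and then square and integrate over $(0,L)$ to get the $L^2$ estimate. Your write-up is merely a bit more explicit about the monotonicity step and the constant in $(\sum a_j)^2 \le 4\sum a_j^2$, which the paper absorbs into ``$cte$''.
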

\begin{proof}
From lemma \ref{lemma:injection}, 
${\bf H}^1 \subset {\bf L}^{\infty}$.
From lemma \ref{lemma:besogne2} we have, 
$$
\begin{array}{l}
||{\bf G}{\bf C}(t)-{\bf G}\hat{{\bf C}}(t)||^2 \\[7pt] 
=  \displaystyle \int_{0}^{L}
|g_N(t,x,{\bf C}(t,x))-g_N(t,x,\hat{{\bf C}}(t,x))|^2
+|g_P(t,x,{\bf C}(t,x))-g_P(t,x,\hat{{\bf C}}(t,x))|^2\\[7pt] 
+|g_Z(t,x,{\bf C}(t,x))-g_Z(t,x,\hat{{\bf C}}(t,x))|^2
+|g_D(t,x,{\bf C}(t,x))-g_D(t,x,\hat{{\bf C}}(t,x))|^2 dx,\\[7pt] 
 \le cte (K_N(||P(t)||_{\infty},||\hat{P}(t)||_{\infty}))^2\ [\ ||N(t)-\hat{N}(t)||_{L^2(0,L)}^2\\[7pt] 
 +||P(t)-\hat{P}(t)||_{L^2(0,L)}^2 + ||Z(t)-\hat{Z}(t)||_{L^2(0,L)}^2 + ||D(t)-\hat{D}(t)||_{L^2(0,L)}^2\ ]\\[7pt] 
 + ...\\[7pt] 
 \le L_{\infty}(||{\bf C}(t)||_{\infty},||\hat{{\bf C}}(t)||_{\infty})^2 
|| {\bf C}(t) - \hat{{\bf C}}(t) ||^2.
\end{array}
$$
\end{proof}

Elementary calculations show that functions of
Table \ref{tab:functions} 
are continuously differentiable on $\Rr^+$ or $(\Rr^+)^2$, 
with bounded first derivatives. Therefore they are
Lipschitz continuous and the uniqueness result presented below also
holds for these formulations.

\begin{proposition}
\label{prop:unicite}
The weak solution to the
one-dimensional NPZD-model, ${\bf C} \in W({\bf H}^1)$, is unique.
\end{proposition}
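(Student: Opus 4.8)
The plan is to run the same energy estimate on the difference of two solutions that was used in Lemma~\ref{lemma:thetacontinu}, but now replacing the global Lipschitz bound for ${\bf G}_n$ by the \emph{local} Lipschitz bound for ${\bf G}$ supplied by Lemma~\ref{prop:biendefbis}. Let ${\bf C}$ and $\hat{\bf C}$ be two weak solutions in $W({\bf H}^1)$ with the same initial datum ${\bf C}^0$. By the positivity part of Theorem~\ref{theo:main} both are nonnegative a.e., so they take values in $(\Rr^+)^4$ and the hypotheses of Lemma~\ref{prop:biendefbis} are met. Setting ${\bf W}={\bf C}-\hat{\bf C}\in W({\bf H}^1)$, subtracting the two variational formulations gives, for a.e.\ $t$ and all $\phi\in{\bf H}^1$,
$$
(\frac{d{\bf W}}{dt},\phi)+a(t,{\bf W},\phi)=({\bf G}{\bf C}-{\bf G}\hat{\bf C},\phi),\qquad {\bf W}(0)=0.
$$

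First I would test with $\phi={\bf W}(t)$. Since ${\bf W}\in W({\bf H}^1)$, the standard energy identity valid on $W({\bf H}^1)$ gives $(\frac{d{\bf W}}{dt},{\bf W})=\frac12\frac{d}{dt}||{\bf W}||^2$, and the coerciveness of Lemma~\ref{lemma:formebi} yields $a(t,{\bf W},{\bf W})\ge c_0||{\bf W}||_1^2\ge0$. Bounding the right-hand side by Cauchy--Schwarz and then invoking the local Lipschitz estimate of Lemma~\ref{prop:biendefbis}, I obtain
$$
\frac12\frac{d}{dt}||{\bf W}(t)||^2+c_0||{\bf W}(t)||_1^2\le ||{\bf G}{\bf C}(t)-{\bf G}\hat{\bf C}(t)||\,||{\bf W}(t)||\le L_\infty(t)\,||{\bf W}(t)||^2,
$$
where $L_\infty(t)=L_\infty(||{\bf C}(t)||_\infty,||\hat{\bf C}(t)||_\infty)$. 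Discarding the nonnegative term $c_0||{\bf W}||_1^2$ leaves the differential inequality $\frac{d}{dt}||{\bf W}(t)||^2\le 2L_\infty(t)\,||{\bf W}(t)||^2$.

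The main obstacle, and the feature distinguishing this from the existence argument, is that $L_\infty(t)$ is \emph{not} a fixed constant: it depends on $t$ through the ${\bf L}^\infty$ norms of the two solutions, which are not known to be uniformly bounded in time. The remedy is to verify that $t\mapsto L_\infty(t)$ is nevertheless integrable on $[0,T]$. Inspecting the functions $K_N,K_P,K_Z,K_D$ constructed in Lemma~\ref{lemma:besogne2} (whose worst term $K_I$ grows only sublinearly, the genuinely linear contribution coming from the factor $\tfrac{1}{k_n}\max(P,\hat P)$), one sees that $L_\infty$ grows at most linearly in $||{\bf C}(t)||_\infty$ and $||\hat{\bf C}(t)||_\infty$. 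By the continuous embedding ${\bf H}^1\subset{\bf L}^\infty$ of Lemma~\ref{lemma:injection} these are controlled by $||{\bf C}(t)||_1$ and $||\hat{\bf C}(t)||_1$, and since a weak solution lies in $W({\bf H}^1)\subset L^2(0,T,{\bf H}^1)$ by definition, we get $L_\infty\in L^2(0,T)\subset L^1(0,T)$.

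With an integrable coefficient in hand, the inequality closes by Gronwall's lemma: integrating on $[0,t]$ and using ${\bf W}(0)=0$,
$$
||{\bf W}(t)||^2\le ||{\bf W}(0)||^2\exp\Big(2\int_0^t L_\infty(s)\,ds\Big)=0
$$
for a.e.\ $t\in[0,T]$. Hence ${\bf C}=\hat{\bf C}$ and the weak solution is unique. The only nonroutine point in the whole argument is the integrability of $L_\infty$, which is precisely why both the at-most-linear growth of the $K$-functions and the $L^2$-in-time bound on $||{\bf C}||_1,||\hat{\bf C}||_1$ are needed; everything else is a direct energy estimate.
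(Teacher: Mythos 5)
Your proof is correct and follows essentially the same route as the paper's: subtract the two variational formulations, test with the difference, use coercivity of $a$ and the local Lipschitz bound of Lemma~\ref{prop:biendefbis}, and close with Gronwall (the paper inserts a Young's inequality step and ends up with the coefficient $\frac{1}{2c_0}L_\infty^2(t)$ instead of your $2L_\infty(t)$, which is immaterial). Your explicit verification that $t\mapsto L_\infty(t)$ is integrable on $[0,T]$ — via the at-most-linear growth of the $K$-functions, the embedding ${\bf H}^1\subset{\bf L}^\infty$, and ${\bf C},\hat{\bf C}\in L^2(0,T,{\bf H}^1)$ — and your explicit appeal to positivity so that Lemma~\ref{prop:biendefbis} applies are points the paper leaves implicit, and they are welcome additions rather than deviations.
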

\begin{proof}
Let us suppose that there are two solutions ${\bf C}^1$ and 
${\bf C}^2 \in W({\bf H}^1)$. 
They satisfy
$$
\forall \phi \in {\bf H}^1, \quad (\ds \frac{d}{dt} ({\bf C}^1 - {\bf C}^2),
{\bf \phi}) +
a(t,{\bf C}^1 - {\bf C}^2,{\bf \phi}) = ({\bf GC}^1-{\bf GC}^2,{\bf \phi}),
$$ 
Let us choose $\phi ={\bf C}^1 - {\bf C}^2$ as a test function. 
Using the coerciveness of $a$ and the Cauchy-Schwarz inequality, we obtain
$$
\ds \frac{1}{2}\frac{d}{dt}||{\bf C}^1(t) - {\bf C}^2(t)||^2
+c_0 ||{\bf C}^1(t) - {\bf C}^2(t)||_1^2
\le 
||{\bf GC}^1(t) - {\bf GC}^2(t)||||{\bf C}^1(t) - {\bf C}^2(t)||.
$$
With Young's inequality, we obtain
$$
\begin{array}{l}
\ds \frac{1}{2}\frac{d}{dt}||{\bf C}^1(t) - {\bf C}^2(t)||^2
+c_0 ||{\bf C}^1(t) - {\bf C}^2(t)||_1^2 \\[7pt]
\le 
\ds \frac{1}{2\alpha}||{\bf GC}^1(t) - {\bf GC}^2(t)||^2
+\ds \frac{\alpha}{2}||{\bf C}^1(t) - {\bf C}^2(t)||^2,
\end{array}
$$
with $\alpha = 2c_0$,
$$
\ds \frac{d}{dt}||{\bf C}^1(t) - {\bf C}^2(t)||^2
\le 
\frac{1}{2 c_0}||{\bf GC}^1(t) - {\bf GC}^2(t)||^2.
$$
From lemma (\ref{prop:biendefbis}) we obtain
$$
\ds \frac{d}{dt}||{\bf C}^1(t) - {\bf C}^2(t)||^2
\le 
\ds \frac{1}{2c_0} L_{\infty}^2(t)||{\bf C}^1(t)-{\bf C}^2(t)||^2.
$$
Thus, integrating on $[0,t]$ and using Gronwall's lemma
$$
||{\bf C}^1(t)-{\bf C}^2(t)||^2 \le
||{\bf C}^1(0)-{\bf C}^2(0)||^2  \exp(\ds \int_0^t 
\frac{1}{2c_0}L_{\infty}^2(s) ds).
$$
This concludes the proof
\end{proof}

\begin{table}[htbp]
\caption{ Optical model parameters \label{tab:optic} }
\begin{tabular}{|p{5cm}|c|c|c|}
\hline
parameter& name & value & unit \\
\hline
Redfield\ ratio\ C:N & $r_d$ & 6.625 & \\ 
contribution\ of\ Chl\ to\ absorbing\ pigments & $r_{pg}$ & 0.7 & \\
carbone:chlorophyll ratio & $r_c$ & 55 & $mgC.mgChla^{-1}$\\
water\ absorption\ in\ red & $k_{ro}$& 0.225 & $m^{-1}$\\
water\ absorption\ in\ green & $k_{go}$& 0.0232 & $m^{-1}$\\
pigment\ absorption\ in\ red & $k_{rp}$& 0.037 &
$m^{-1}.(mgChl.m^{-3})^{-l_r}$ \\
pigment\ absorption\ in\ green & $k_{gp}$& 0.074 & $m^{-1}.(mgChl.m^{-3})^{-l_g}$ \\
power law for absorption in red & $l_{r}$ & 0.629 & \\
power law for absorption in green & $l_{g}$ & 0.674 & \\
\hline
\end{tabular}
\end{table}

\vspace*{1pt}\textlineskip	
\section{Conclusion}
\vspace*{-0.5pt}
\noindent
We have presented a qualitative analysis of a one-dimensional biological 
NPZD-model. This model describes the evolution over time and space of 
four biological variables, 
phytoplankton, zooplankton, nutrients and detritus. 
The only physical process which is taken into account is vertical diffusion and 
the biological model is imbedded in a physical turbulence model 
which we did not give explicitly but appeared as a space and time-dependent 
mixing coefficient. 
The model's equation for detritus also contains an advection term which represents 
the sinking of detritus with a constant speed. All four variables 
interact through nonlinear reaction terms which depend on space and time through 
the action of light and present a discontinuity in the space variable at a particular 
depth.

We have formulated an initial-boundary value problem and proved 
existence of a unique weak
solution to it. Furthermore, a detailed investigation of the
reaction terms enabled us to prove positivity of the solution. This is
biologically important since variables represent concentrations which
should always be positive quantities. We have also shown that the result still holds 
if different parameterizations of biological processes found in 
the biogeochemical modeling literature are used.

The analysis conducted in this paper is a necessary first step towards the 
investigation of qualitative properties other than positivity which 
might be of interest. For example Boushaba \ea \cite{Boushaba:2002} 
deal with the problem of determining the asymptotic behavior of solutions 
to their phytoplankton model. One could also wish to investigate the bifurcational 
structure of the NPZD-model even though the complexity 
of the analytical formulation of the equations might constitute 
a difficulty. This type of study could help the understanding of the modifications 
of evolution of the NPZD system under minor changes in the values of parameters reported 
in Edwards \cite{Edwards:2001} and Faugeras \ea \cite{Faugeras:2002}. 

Eventually we would like to point out that the analysis we presented 
can easily be extended to models containing any
number, $n$, of biological variables as long as the nonlinearities allow us to
define a continuous nonlinear operator ${\bf G}$ satisfying
$||{\bf GC}||_{L^2(0,T,(L^2(0,L))^n)} \le M_g
||{\bf C}||_{L^2(0,T,(L^2(0,L))^n)}$. However, in such more complex models, 
the question of positivity seems to be delicate as 
equations have to be treated one after the
other, and the right order has to be found as the positivity
of some variables can depend on the positivity of others. 
Let us also mention that the analysis can be extended to three-dimensional models in which 
not only mixing coefficients but also velocities, calculated by an ocean circulation model, are included 
in the system of partial differential equations constituing the biological model. 
These velocities can be included in the bilinear form, $a(t,.,.)$, as $v_d$, the detritus sedimentation
speed, is in the formulation of the initial-boundary value problem we studied.\\   
~\\
{\bf Acknowledgements}.\\
The author is grateful to his thesis supervisors, Jacques Blum and Jacques Verron, 
for their guidance. The author also thanks Marina L\'evy and Laurent M\'emery for helpful 
discussions on physical and biological models, 
and Jean-Pierre Puel for his mathematical advices.

\end{document}